\documentclass{amsart}

\usepackage{geometry}

\usepackage{amsmath,amssymb,amsthm}
\usepackage{xcolor}
\usepackage{graphicx} 

\makeatletter
\@namedef{subjclassname@2020}{%
  \textup{2020} Mathematics Subject Classification}
\makeatother

\newtheorem{thm}{Theorem}[section]

\newtheorem{prop}[thm]{Proposition}
\newtheorem{corollary}[thm]{Corollary}
\newtheorem{defn}[thm]{Definition}
\newtheorem{remark}[thm]{Remark}
\newtheorem{example}[thm]{Example}

\numberwithin{equation}{section}

\newcommand{\R}{\mathbb{R}}

\newcommand{\cG}{\mathcal{G}}

\newcommand{\td}{\tilde{d}}
\newcommand{\tW}{\widetilde{W}}
\renewcommand{\vec}[1]{\mathbf{#1}}

\everymath{\displaystyle}

\author{Fatma Terzioglu and Ryan Murray}
\address{Department of Mathematics, North Carolina State University, Raleigh, NC 27695}
\email[Corresponding author]{fterzioglu@ncsu.edu}
\email{rwmurray@ncsu.edu}
\date{}

\title[Stability of the $k$-Plane Transform on Measures]{Stability of the $k$-Plane Transform on Measures and H\"older-Type Comparisons of Wasserstein Metrics}

\subjclass[2020]{44A12, 46F12, 65R10}

\keywords{k-plane transform, Radon measures, stability estimates, Fourier distance, Wasserstein distance, max-sliced Wasserstein distance}

\begin{document}

\begin{abstract}
We establish stability estimates for the $k$-plane transform on finite positive Radon measures, with emphasis on Fourier and Wasserstein metrics. We first introduce a metric on $k$-plane transform data and prove a bi-Lipschitz stability estimate showing that this metric is equivalent to a generalized Fourier metric obtained by augmenting the Fourier distance between centered normalized measures with separate barycenter and total mass difference terms.

Building on a H\"older-type comparison between Fourier and Wasserstein metrics due to Carrillo and Toscani, we extend this comparison to positive Radon measures under uniform bounds on centered moments of order slightly larger than $2$. This yields H\"older-type stability for the $k$-plane transform in a generalized $2$-Wasserstein metric and, in particular, a $W_2$-stability estimate for centered probability measures.

We also compare the $2$-Wasserstein distance with its max-sliced analogue. For centered probability measures with uniformly bounded moments of order slightly larger than $2$, we prove a two-sided H\"older-type comparison between these distances. We then extend the result to positive Radon measures by applying it to centered normalized measures and adding separate barycenter and mass terms.

Finally, for absolutely continuous compactly supported probability measures with bounded densities, we prove a strong equivalence between the $2$-Wasserstein distance of the measures and the $(k/2-1)$-order Sobolev norm of the $k$-plane transform data of the difference of their densities.
\end{abstract}

\maketitle
\tableofcontents

\section{Introduction}\label{s:section1}
The $k$-plane transform plays a central role in integral geometry and tomography, and has more recently also proved useful in optimal transport, especially in connection with projection-based Wasserstein distances. Given a function $f$ on $\mathbb R^d$, its $k$-plane transform is defined by
\begin{align}\label{Pdef}
Pf(\alpha,y)=\int_\alpha f(y+x)\,dx,
\end{align}
where $\alpha \in G_{k,d}$, $y\in \alpha^\perp$, and $G_{k,d}$ denotes the Grassmannian of all $k$-dimensional linear subspaces of $\mathbb R^d$. The pair $(\alpha,y)$ represents the affine $k$-plane $y+\alpha$, and the collection of all such planes is the affine Grassmannian
$$
\mathcal G_{k,d}=\{(\alpha,y):\alpha\in G_{k,d},\ y\in \alpha^\perp\}.
$$

Two important special cases are the X-ray transform, corresponding to $k=1$ and integration over lines, and the classical Radon transform, corresponding to $k=d-1$ and integration over hyperplanes. Both arise in imaging science, especially in computerized tomography, where one seeks to recover an unknown function from its integrals over lines or planes. Because such inverse problems are ill-posed, stability estimates are essential for quantifying how measurement errors propagate into the reconstruction; see, for instance, \cite{Natterer}.

For compactly supported Sobolev functions, quantitative stability estimates for the $k$-plane transform have recently been established by the first author in \cite{Terzioglu2026}, extending classical results of Natterer \cite{Natterer} regarding X-ray and Radon transforms; see also \cite{Hahn,Hertle,Louis}. More precisely, if
$$
\|f\|_{H^s(\mathbb R^d)}
=
\left(
\int_{\mathbb R^d} |\widehat f(\xi)|^2(1+|\xi|^2)^s\,d\xi
\right)^{1/2}, \qquad s\in \mathbb R,
$$
and
\begin{align}\label{eq:sobolev-norm-G}
\|u\|_{H^s(\mathcal G_{k,d})}^2
=
\int_{G_{k,d}}\int_{\alpha^\perp}
(1+|\eta|^2)^s
|\widehat u(\alpha,\eta)|^2
\,d\eta\,d\alpha,
\end{align}
where $\widehat u(\alpha,\eta)$ denotes the Fourier transform of $u(\alpha,y)$ in the fiber variable $y$, then the following estimate holds.

\begin{thm}[\cite{Terzioglu2026}, Theorem 4.1]\label{kplaneSobolevEstimate}
Let $s\in\mathbb R$ and suppose that $f\in H^s(\mathbb R^d)$ satisfies $\operatorname{supp}(f)\subseteq \overline{\Omega}$ for some bounded open set $\Omega\subset\mathbb R^d$. Then there exist constants $c_{s,d,k},C_{s,d,k}>0$ such that
\begin{align}\label{StabilitySobolev}
c_{s,d,k}\|f\|_{H^s(\mathbb R^d)}
\le
\|Pf\|_{H^{s+k/2}(\mathcal G_{k,d})}
\le
C_{s,d,k}\|f\|_{H^s(\mathbb R^d)}.
\end{align}
\end{thm}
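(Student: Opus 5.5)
The natural route is the Fourier slice theorem together with a polar-type integration formula over the Grassmannian. For $f\in L^1$ with compact support, taking the Fourier transform of $Pf(\alpha,\cdot)$ on $\alpha^\perp$ gives $\widehat{Pf}(\alpha,\eta)=c\,\widehat f(\eta)$ for $\eta\in\alpha^\perp$, with a normalization constant $c$ depending only on the Fourier convention. Substituting this into \eqref{eq:sobolev-norm-G} yields
\begin{align*}
\|Pf\|^2_{H^{s+k/2}(\mathcal G_{k,d})}=c^2\int_{G_{k,d}}\int_{\alpha^\perp}(1+|\eta|^2)^{s+k/2}|\widehat f(\eta)|^2\,d\eta\,d\alpha .
\end{align*}
Next I would use the integration formula $\int_{G_{k,d}}\int_{\alpha^\perp}g(\eta)\,d\eta\,d\alpha=c_{d,k}\int_{\mathbb R^d}g(\xi)|\xi|^{-k}\,d\xi$. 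This follows from rotation invariance: both sides define $O(d)$-invariant functionals, and one checks them on radial $g$ using polar coordinates in the $(d-k)$-dimensional space $\alpha^\perp$. The formula reduces the norm to $c'\int(1+|\xi|^2)^{s+k/2}|\xi|^{-k}|\widehat f(\xi)|^2\,d\xi$. For general $f\in H^s$ supported in $\overline\Omega$, I would approximate by mollified $f_\epsilon$ supported in a slightly larger ball and pass to the limit; this works once the two-sided estimate is established for the smooth approximants.

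The upper bound follows from a split at $|\xi|=1$. For $|\xi|\ge1$ we have $(1+|\xi|^2)^{k/2}|\xi|^{-k}\le 2^{k/2}$, so this region contributes at most a constant times $\|f\|_{H^s}^2$. For $|\xi|\le1$ the weight $|\xi|^{-k}$ is integrable because $k<d$. Compact support gives $\sup_{|\xi|\le1}|\widehat f(\xi)|\le C\|f\|_{H^s}$: write $f=\chi f$ with $\chi\in C_c^\infty$ equal to $1$ on $\overline\Omega$, so that $\widehat f(\xi)=\langle f,\chi e^{-ix\cdot\xi}\rangle$ and $|\widehat f(\xi)|\le\|f\|_{H^s}\|\chi e^{-ix\cdot\xi}\|_{H^{-s}}$. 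The last factor is bounded uniformly for $|\xi|\le1$.

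The lower bound is immediate, since $(1+|\xi|^2)^{k/2}|\xi|^{-k}\ge1$ for every $\xi\ne0$. Hence $\|Pf\|^2_{H^{s+k/2}}\ge c'\|f\|^2_{H^s}$ without any use of the support assumption. The main obstacle is therefore the low-frequency part of the upper bound, where compact support is essential. A secondary technical point is justifying the slice theorem and the integration formula for distributions in $H^s$ with $s<0$. I would handle this through the density argument above, using that both sides of the estimate are continuous in $H^s$ on functions supported in a fixed compact set.
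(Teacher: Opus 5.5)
The paper does not prove this theorem; it is quoted from \cite{Terzioglu2026}, so there is no in-paper argument to compare against. Your proof is correct. It is the standard Natterer-type argument that the introduction says the cited result extends, and it has four ingredients:
\begin{itemize}
\item the slice identity $\widehat{Pf}(\alpha,\eta)=c\,\widehat f(\eta)$ for $\eta\in\alpha^\perp$ (Proposition \ref{p:FST} is its measure version);
\item the Grassmannian polar formula, whose constant is $|\mathbb S^{d-k-1}|/|\mathbb S^{d-1}|$ for the probability measure $d\alpha$;
\item the pointwise bound $(1+|\xi|^2)^{k/2}|\xi|^{-k}\ge 1$, which gives the lower estimate;
\item a split at $|\xi|=1$, which gives the upper estimate.
\end{itemize}
Your control of $\sup_{|\xi|\le 1}|\widehat f(\xi)|$ by a cutoff $\chi$ and $H^s$--$H^{-s}$ duality is the same device the paper uses in the proof of Proposition \ref{Sobolev-1Equivalence}. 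Your density step is also sound. The upper bound applied to $f_\epsilon-f_{\epsilon'}$, all supported in a fixed compact set, makes $Pf_\epsilon$ Cauchy in $H^{s+k/2}(\mathcal G_{k,d})$. Its limit agrees with the distributional $Pf$ defined through $P^*$.

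You should state explicitly that your upper constant depends on $\Omega$ through $\chi$, and that this cannot be avoided. Take $f_R(x)=R^{-d/2}\phi(x/R)$ with $\phi\in C_c^\infty(\mathbb R^d)$. Then $\|f_R\|_{H^s(\mathbb R^d)}$ stays bounded as $R\to\infty$. By your own formula, however, $\|Pf_R\|_{H^{s+k/2}(\mathcal G_{k,d})}^2$ behaves like a constant times $R^k\int_{\mathbb R^d}|\zeta|^{-k}|\widehat\phi(\zeta)|^2\,d\zeta$. The subscript in $C_{s,d,k}$ must therefore be read as also allowing dependence on the size of $\Omega$. Only your lower constant, which depends on $d$, $k$ and the Fourier normalization alone, is genuinely independent of $\Omega$.
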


This estimate shows, in particular, that the $k$-plane transform gains $k/2$ derivatives on the Sobolev scale. Extensions to more general function spaces have also been considered; for the Radon transform see \cite{Kindermann2025,Sharafutdinov2017tensor,Sharafutdinov2021radon,GGG}, for the X-ray transform see \cite{Sharafutdinov2021x-ray}, and for the $k$-plane transform see \cite{Terzioglu2026}. 

Recently, Parhi and Unser \cite{Parhi2024} studied the $k$-plane transform and its dual in the space of distributions, proving invertibility results for the backprojection operator on suitable Banach spaces and applying these results to the regularization of inverse problems. 

In the setting of measures, Hahn and Quinto \cite{Hahn} studied the Radon transform of probability measures and finite signed measures, and showed that distances between such measures can be estimated from below and above by distances between their Radon transforms. Their analysis was carried out in the Prohorov metric, the dual bounded-Lipschitz metric, and other metrics that metrize weak convergence of probability measures.

In a related direction, the present paper considers stability questions for the $k$-plane transform of measures, with particular emphasis on quantitative comparison with Fourier and Wasserstein metrics. This is motivated both by analysis and by applications. In many problems, the underlying objects are more appropriately modeled by measures than by Sobolev functions; examples include sparse images, point sources, and particle distributions. At the same time, Wasserstein distances have become increasingly useful in imaging, inverse problems, and data science \cite{PeyreCuturi2019,BurgerSchonlieb2020}, because they provide a natural geometric framework for measures \cite{Villani2003,PanaterosZemel2019} and often exhibit more favorable variational properties, including improved convexity in optimization-based approaches to inverse problems \cite{EngquistRenYang2020}.

A related extension of Fourier-based metrics to probability measures with different barycenters was considered in \cite{Toscani2020}, where the authors introduce a translated version of the Fourier metric $d_2$ and study its use in imaging problems. Their main focus is the discrete setting, where finite-dimensional norm equivalences yield explicit comparisons with Wasserstein distances. In contrast, our results concern positive Radon measures with arbitrary total mass and their $k$-plane data. We use the continuous-space comparison between $d_2$ and $W_2$ in its H\"older form, following Carrillo--Toscani \cite{Carrillo2007}, and extend it by adding separate terms for the centered normalized component, barycenter difference, and mass difference.

Throughout the paper, we write $A\simeq B$ when there exist constants $c,C>0$ such that $cA\le B\le CA$. Our main contributions are as follows.
\begin{enumerate}
\item Our first main result is a bi-Lipschitz stability estimate for the $k$-plane transform on positive Radon measures. More precisely, we introduce a metric $D(P\mu,P\nu)$ on $k$-plane data (Definition \ref{d:k-planeDist}) and prove in Theorem \ref{t:kplaneEq_d2_general} that, for positive Radon measures $\mu,\nu$ with finite second moment,
$$
D(P\mu,P\nu)\simeq \widetilde d_2(\mu,\nu),
$$
where $\widetilde d_2$ is a generalized Fourier metric obtained from the classical Fourier metric \cite{Carrillo2007} by combining the $d_2$-distance between the centered normalized measures with the differences in barycenter and total mass; see Definition \ref{d:d2tilde}. For centered probability measures $\mu,\nu$, one has the exact identity
$$
D(P\mu,P\nu)=d_2(\mu,\nu),
$$
by Proposition \ref{p:PdistEqWass}.

\item Our second main result concerns the comparison between Fourier and Wasserstein metrics. For probability measures with finite moments up to order $2+\rho$, $\rho >0$, Carrillo and Toscani \cite{Carrillo2007} established a H\"older-type comparison between $d_2$ and $W_2$:
$$
c\, d_2(\mu,\nu) \le W_2(\mu,\nu) \le C\, d_2(\mu,\nu)^q, \qquad 0<q \le 1/2.
$$
In the present work, we prove an analogue for positive Radon measures by combining the corresponding estimate for the centered normalized measures with separate terms accounting for differences in barycenter and total mass; see Proposition \ref{p:td2tW2Holder}. Combined with the bi-Lipschitz stability estimate for $D(P\mu,P\nu)$, this yields a H\"older-type stability estimate for the $k$-plane transform in terms of the generalized Wasserstein distance:
$$
c\,\min\bigl\{\widetilde W_2(\mu,\nu)^{1/q},\widetilde W_2(\mu,\nu)\bigr\}
\le
D(P\mu,P\nu)
\le
C\,\widetilde W_2(\mu,\nu).
$$

\item Projection-based Wasserstein distances have received considerable attention in optimal transport and machine learning because they are often more tractable computationally than Wasserstein distances on the ambient space (see e.g., \cite{Deshpande2019, PatyCuturi2019}). For $\mu,\nu\in\mathcal P_2(\mathbb R^d)$, the max-sliced (or projection robust) 2-Wasserstein distance is defined by
$$
MSW_2(\mu,\nu)
:=
\sup_{\alpha\in G_{k,d}} W_2(P_\alpha\mu,P_\alpha\nu).
$$

In the $W_1$ case, Bayraktar and Guo \cite{BayraktarGuo2021} proved that $MSW_1\simeq W_1$, while Carlier, Figalli, M\'erigot, and Wang \cite{Carlier2025} obtained sharp one-sided comparisons for sliced $W_1$, including $k$-plane slicing. In the $W_2$ case, Paty and Cuturi \cite{PatyCuturi2019} proved a strong equivalence between $W_2$ and the subspace-robust distance $S_k$, a min-max relaxation of a projection-based transport problem.

As our third main result, we consider centered probability measures with uniformly bounded $(2+\rho)$-moments, and prove a two-sided H\"older-type comparison of the form
$$
c\,W_2(\mu,\nu)^{1/q}\le MSW_2(\mu,\nu)\le C\,W_2(\mu,\nu)^q,
\qquad 0<q \le 1/2.
$$

We also extend this comparison to positive Radon measures by combining the corresponding estimate for centered normalized measures with separate terms accounting for differences in barycenter and total mass, obtaining a corresponding two-sided H\"older-type comparison between $MS\widetilde W_2$ and $\widetilde W_2$; see Theorem \ref{maxSW-W}.

\item Finally, for absolutely continuous compactly supported probability measures with densities bounded above and below, in Theorem \ref{thm:kplaneW2densities}, we obtain the bi-Lipschitz estimate
$$
\|Pf-Pg\|_{H^{k/2-1}(\mathcal G_{k,d})}\simeq W_2(f\,dx,g\,dx).
$$
\end{enumerate}

The paper is organized as follows. In Section 2, we develop the $k$-plane transform for positive Radon measures and establish its basic properties. In Section 3, we introduce the generalized Fourier metric $\widetilde d_2$ and the metric $D(P\mu,P\nu)$ on $k$-plane data, and prove the corresponding bi-Lipschitz stability estimate. In Section 4, we compare Fourier and Wasserstein metrics and derive H\"older-type estimates for positive Radon measures through generalized metrics that include separate barycenter and mass terms. In Section 5, we use Fourier-based distance estimates for the $k$-plane transform data to establish H\"older-type comparisons between max-sliced and Wasserstein distances for probability measures, together with corresponding generalized analogs for positive Radon measures. In the final section, we specialize to compactly supported probability densities and obtain a bi-Lipschitz estimate for $k$-plane data in the $2$-Wasserstein metric.

\section{The $k$-plane transform of measures and its basic properties}
In this section we review the definition and basic properties of the $k$-plane transform. As we are interested in measure-valued data, we develop our definitions using distribution theory, following the same approach as in \cite{gel2014integral} for the Radon Transform.

Let $C_0(\mathbb R^d)$ denote the space of continuous functions on $\mathbb R^d$ that vanish at infinity, equipped with the supremum norm $\|\cdot\|_\infty$. We write $\mathcal M(\mathbb R^d)$ for the dual of $C_0(\mathbb R^d)$, that is, the space of all continuous linear functionals on $C_0(\mathbb R^d)$. By the Riesz representation theorem (see e.g., Theorem 7.2, \cite{Folland}), $\mathcal M(\mathbb R^d)$ may be identified with the space of finite Radon measures on $\mathbb R^d$. Accordingly, for $\mu \in \mathcal M(\mathbb R^d)$ and $\varphi \in C_0(\mathbb R^d)$, we write
$$
\langle \mu,\varphi\rangle = \int_{\mathbb R^d} \varphi\, d\mu.
$$

We denote by $\mathcal M^+(\mathbb R^d)$ the convex cone of finite positive Radon measures in $\mathcal M(\mathbb R^d)$ with positive total mass, that is,
$$
0 < M_\mu := \mu(\mathbb R^d) < \infty.
$$

For $m \ge 0$, we denote by $\mathcal M_m^+(\mathbb R^d)$ the set of measures $\mu \in \mathcal M^+(\mathbb R^d)$ with finite absolute moments up to order $m$, namely,
$$
\mathcal M_m^+(\mathbb R^d)
:=
\left\{
\mu \in \mathcal M^+(\mathbb R^d)
:
\int_{\mathbb R^d} |x|^m\, d\mu(x) < \infty,
\right\}.
$$

We note that since $M_\mu<\infty$, finiteness of the $m$-th absolute moment also implies finiteness of all lower-order absolute moments. Indeed, for every $0\le m'\le m$,
$$
|x|^{m'}\le 1+|x|^m,
\qquad x\in \mathbb R^d.
$$
Therefore, if $\mu\in \mathcal M_m^+(\mathbb R^d)$, then
$$
\int_{\mathbb R^d}|x|^{m'}\,d\mu(x)
\le
\int_{\mathbb R^d}(1+|x|^m)\,d\mu(x)
=
M_\mu+\int_{\mathbb R^d}|x|^m\,d\mu(x)
<\infty
$$
for all $0\le m'\le m$.

For $\mu \in \mathcal M_1^+(\mathbb R^d)$, the barycenter of $\mu$ is defined by
$$
\vec{m}_\mu
:=
\frac{1}{M_\mu}\int_{\mathbb R^d} x\, d\mu(x).
$$

We denote by $\mathcal P_m(\mathbb R^d)$ the space of probability measures with finite absolute moments up to order $m$, namely,
$$
\mathcal P_m(\mathbb R^d)
:=
\{\mu \in \mathcal M_m^+(\mathbb R^d) : \mu(\mathbb R^d)=1\}.
$$
If $\mu \in \mathcal M_m^+(\mathbb R^d)$, then its normalized measure
$$
\bar\mu := \frac{1}{M_\mu}\mu,
$$
belongs to $\mathcal P_m(\mathbb R^d)$, and satisfies
$$
\langle \bar\mu,\varphi\rangle
=
\frac{1}{M_\mu}\langle \mu,\varphi\rangle,
\qquad \varphi \in C_0(\mathbb R^d).
$$

For $a\in \mathbb R^d$, we define the shift of $\mu$ by $a$ by
\begin{align}\label{shift}
\langle T_a\mu,\varphi\rangle
=
\langle \mu, T_{-a}\varphi\rangle,
\qquad
\varphi \in C_0(\mathbb R^d),
\qquad
T_{-a}\varphi(x)=\varphi(x-a).
\end{align}

We also denote by
$$
\mu_0 := T_{\vec{m}_\mu}\mu
$$
the centered version of $\mu$. In particular, $\mu_0$ has barycenter at the origin.

Let $G_{k,d}$ denote the Grassmannian of all $k$-dimensional linear subspaces of $\mathbb R^d$. We define the space of affine $k$-planes in $\mathbb R^d$ by
$$
\mathcal G_{k,d}
=
\{(\alpha,y): \alpha \in G_{k,d},\ y\in \alpha^\perp\},
$$
where the pair $(\alpha,y)$ represents the affine $k$-plane $y+\alpha$. For $\alpha \in G_{k,d}$, we let $\pi_{\alpha^\perp}:\mathbb R^d \to \alpha^\perp$ denote the orthogonal projection onto $\alpha^\perp$.

\begin{defn}
For $\psi \in C_0(\mathcal G_{k,d})$, we define the backprojection operator by
\begin{align}\label{kplaneAdj}
P^*\psi(x)
=
\int_{G_{k,d}} \psi(\alpha,\pi_{\alpha^\perp}(x))\, d\alpha,
\end{align}
where $d\alpha$ is the canonical $O(d)$-invariant probability measure on $G_{k,d}$, obtained as the pushforward of the Haar measure on $O(d)$ under the quotient map $O(d)\to G_{k,d}$.
\end{defn}

If $\psi \in C_0(\mathcal G_{k,d})$, then $P^*\psi \in C_0(\mathbb R^d)$; see the Appendix for a proof. We can now define the $k$-plane transform as a type of adjoint of the backprojection operator.

\begin{defn}
The $k$-plane transform (or projection) of $\mu \in \mathcal M(\mathbb R^d)$ is the measure $P\mu \in \mathcal M(\mathcal G_{k,d})$ defined by
\begin{align}\label{kplane}
\langle P\mu,\psi\rangle
=
\langle \mu, P^*\psi\rangle,
\qquad
\psi \in C_0(\mathcal G_{k,d}),
\end{align}
where $P^*$ is given by \eqref{kplaneAdj}.
\end{defn}

For a fixed $\alpha \in G_{k,d}$, we also consider the projection of $\mu$ onto $\alpha^\perp$.

\begin{defn}
Let $\mu \in \mathcal M(\mathbb R^d)$ and $\alpha \in G_{k,d}$. We define
$$
P_\alpha\mu := \pi_{\alpha^\perp}\#\mu,
$$
the pushforward of $\mu$ under $\pi_{\alpha^\perp}$. Equivalently,
\begin{align}\label{kplaneT}
\langle P_\alpha\mu,\phi\rangle
=
\langle \mu,\phi\circ \pi_{\alpha^\perp}\rangle,
\qquad
\phi \in C_0(\alpha^\perp).
\end{align}
\end{defn}

Since $\pi_{\alpha^\perp}:\mathbb R^d\to \alpha^\perp$ is continuous, the pushforward $P_\alpha\mu=\pi_{\alpha^\perp}\#\mu$ is a finite Radon measure on $\alpha^\perp$ for every $\mu\in \mathcal M(\mathbb R^d)$ and every $\alpha\in G_{k,d}$.

The following proposition collects several basic properties of the $k$-plane transform that will be used throughout the paper.

\begin{prop}[Basic properties of the $k$-plane transform]\label{p:kplaneProperties}
Let $\alpha \in G_{k,d}$ and $\mu \in \mathcal M_1^+(\mathbb R^d)$. Then:
\begin{enumerate}
\item[(i)] $P_\alpha\mu$ preserves total mass:
$$
M_{P_\alpha\mu}=M_\mu.
$$

\item[(ii)] If $\mu \in \mathcal M_m^+(\mathbb R^d)$, $m \ge 0$, then
$$
P_\alpha\mu \in \mathcal M_m^+(\alpha^\perp), \quad \alpha \in G_{k,d}.
$$

\item[(iii)] For any $\mu \in \mathcal M_1^+(\mathbb R^d)$ the barycenter of the projection is the projection of the barycenter:
$$
\vec{m}_{P_\alpha\mu} = \pi_{\alpha^\perp}(\vec{m}_\mu).
$$

\item[(iv)] $P_\alpha$ commutes with shifts:
$$
P_\alpha(T_a\mu)=T_{\pi_{\alpha^\perp}(a)}(P_\alpha\mu),
\qquad a\in \mathbb R^d.
$$

\item[(v)] Normalization and recentering commute with projection:
$$
(\overline{P_\alpha\mu})_0 = P_\alpha\bar\mu_0.
$$
\end{enumerate}
\end{prop}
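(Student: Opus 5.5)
All five items follow from the pushforward description $P_\alpha\mu=\pi_{\alpha^\perp}\#\mu$ and the change-of-variables formula
$$
\int_{\alpha^\perp} h\,d(P_\alpha\mu)=\int_{\mathbb R^d} h\circ\pi_{\alpha^\perp}\,d\mu ,
$$
valid for every nonnegative Borel $h$ and for every $h$ integrable with respect to $P_\alpha\mu$. The one technical point is that \eqref{kplaneT} is stated only for $\phi\in C_0(\alpha^\perp)$. We first extend it to bounded Borel, nonnegative measurable, and then $\mu$-integrable test functions. This is the standard pushforward identity, obtained by monotone convergence from indicators of Borel sets. Since both sides of \eqref{kplaneT} define finite Radon measures agreeing on $C_0$, they coincide as measures, so the extension is legitimate. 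The main (minor) obstacle is this justification for the unbounded functions $1$, $|y|^m$ and $y$.

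For (i) we take $h\equiv 1$, which gives $M_{P_\alpha\mu}=\mu(\mathbb R^d)=M_\mu$. For (ii) we take $h(y)=|y|^m$ and use that orthogonal projection is a contraction, $|\pi_{\alpha^\perp}x|\le|x|$. This yields $\int|y|^m\,dP_\alpha\mu\le\int|x|^m\,d\mu<\infty$. Positivity and finite positive mass are preserved by (i).

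For (iii) we apply the formula componentwise to the vector-valued function $h(y)=y$, which is integrable by (ii) with $m=1$. Using linearity of $\pi_{\alpha^\perp}$ to pull it out of the integral, together with (i),
$$
\vec m_{P_\alpha\mu}=\frac{1}{M_\mu}\int\pi_{\alpha^\perp}(x)\,d\mu(x)=\pi_{\alpha^\perp}\Big(\frac1{M_\mu}\int x\,d\mu\Big)=\pi_{\alpha^\perp}(\vec m_\mu).
$$
For (iv) we test against $\phi\in C_0(\alpha^\perp)$. Unwinding the definitions \eqref{shift} and \eqref{kplaneT}, the left side is $\int\phi(\pi_{\alpha^\perp}(x)-\pi_{\alpha^\perp}(a))\,d\mu$ by linearity of the projection. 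This equals $\langle P_\alpha\mu,\phi(\cdot-\pi_{\alpha^\perp}a)\rangle=\langle T_{\pi_{\alpha^\perp}a}P_\alpha\mu,\phi\rangle$. Whatever sign convention \eqref{shift} encodes, the same convention appears on both sides, so the identity holds regardless.

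For (v) we first note that pushforward is linear, so $P_\alpha\bar\mu=\overline{P_\alpha\mu}$ by (i). The normalized measure has the same barycenter, $\vec m_{\bar\mu}=\vec m_\mu$. Hence, writing the centering as the shift appearing in the definition $\mu_0=T_{\vec m_\mu}\mu$, we get
$$
P_\alpha\bar\mu_0=P_\alpha T_{\vec m_\mu}\bar\mu=T_{\pi_{\alpha^\perp}\vec m_\mu}P_\alpha\bar\mu
$$
by (iv). By (iii), $\pi_{\alpha^\perp}\vec m_\mu=\vec m_{P_\alpha\mu}=\vec m_{\overline{P_\alpha\mu}}$, so the right side is exactly $(\overline{P_\alpha\mu})_0$.
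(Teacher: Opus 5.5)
Your proposal is correct and follows essentially the same route as the paper. Both arguments use the pushforward change of variables together with the contraction $|\pi_{\alpha^\perp}x|\le|x|$ for (i)--(iii), test (iv) against $\phi\in C_0(\alpha^\perp)$ using linearity of $\pi_{\alpha^\perp}$, and obtain (v) by combining (i), (iii) and (iv); your only addition is the explicit justification of the change-of-variables formula for the unbounded integrands $1$, $|y|^m$ and $y$, which the paper takes for granted from the definition $P_\alpha\mu=\pi_{\alpha^\perp}\#\mu$.
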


\begin{proof}

\begin{enumerate}
\item[(i)] By definition of pushforward,
$$
M_{P_\alpha\mu}
=
P_\alpha\mu(\alpha^\perp)
=
\mu\big(\pi_{\alpha^\perp}^{-1}(\alpha^\perp)\big)
=
\mu(\mathbb R^d)
=
M_\mu.
$$

\item[(ii)] If $\mu \in \mathcal M_m^+(\mathbb R^d)$, then
$$
\int_{\alpha^\perp} |y|^m\, d(P_\alpha\mu)(y)
=
\int_{\mathbb R^d} |\pi_{\alpha^\perp}(x)|^m\, d\mu(x)
\le
\int_{\mathbb R^d} |x|^m\, d\mu(x)
<
\infty.
$$
Hence $P_\alpha\mu \in \mathcal M_m^+(\alpha^\perp)$.

\item[(iii)] Using the definition of the pushforward, we obtain
$$
\int_{\alpha^\perp} y\, d(P_\alpha\mu)(y)
=
\int_{\mathbb R^d} \pi_{\alpha^\perp}(x)\, d\mu(x).
$$
We note that these integrals are finite by (ii).
Therefore, by part (i),
$$
\vec{m}_{P_\alpha\mu}
=
\frac{1}{M_{P_\alpha\mu}}
\int_{\alpha^\perp} y\, d(P_\alpha\mu)(y)
=
\frac{1}{M_\mu}
\int_{\mathbb R^d} \pi_{\alpha^\perp}(x)\, d\mu(x).
$$
Since $\pi_{\alpha^\perp}$ is a continuous linear map and $\mu$ has finite first moment, we may pass $\pi_{\alpha^\perp}$ through the vector-valued integral:
$$
\frac{1}{M_\mu}
\int_{\mathbb R^d} \pi_{\alpha^\perp}(x)\, d\mu(x)
=
\pi_{\alpha^\perp}\!\left(\frac{1}{M_\mu}\int_{\mathbb R^d} x\, d\mu(x)\right)
=
\pi_{\alpha^\perp}(\vec{m}_\mu),
$$
which proves the claim.

\item[(iv)] Let $\phi \in C_0(\alpha^\perp)$. Since $\pi_{\alpha^\perp}$ is linear,
$$
T_{-a}(\phi\circ \pi_{\alpha^\perp})(x)
=
\phi(\pi_{\alpha^\perp}(x-a))
=
\phi(\pi_{\alpha^\perp}(x)-\pi_{\alpha^\perp}(a))
=
(T_{-\pi_{\alpha^\perp}(a)}\phi)(\pi_{\alpha^\perp}(x)).
$$
Therefore,
\begin{align*}
\langle P_\alpha(T_a\mu),\phi\rangle
&=
\langle T_a\mu,\phi\circ \pi_{\alpha^\perp}\rangle 
=
\langle \mu, T_{-a}(\phi\circ \pi_{\alpha^\perp})\rangle 
=
\langle \mu, (T_{-\pi_{\alpha^\perp}(a)}\phi)\circ \pi_{\alpha^\perp}\rangle \\
&=
\langle P_\alpha\mu, T_{-\pi_{\alpha^\perp}(a)}\phi\rangle 
=
\langle T_{\pi_{\alpha^\perp}(a)}(P_\alpha\mu),\phi\rangle.
\end{align*}
Hence
$$
P_\alpha(T_a\mu)=T_{\pi_{\alpha^\perp}(a)}(P_\alpha\mu).
$$

\item[(v)] By part (i), and the linearity of $P_\alpha$,
$$
\overline{P_\alpha\mu}
=
\frac{1}{M_{P_\alpha\mu}}P_\alpha\mu
=
\frac{1}{M_\mu}P_\alpha\mu
=
P_\alpha\bar\mu.
$$
By parts (iii) and (iv),
$$
(P_\alpha\nu)_0
=
T_{\vec{m}_{P_\alpha\nu}}(P_\alpha\nu)
=
T_{\pi_{\alpha^\perp}(\vec{m}_\nu)}(P_\alpha\nu)
=
P_\alpha(T_{\vec{m}_\nu}\nu)
=
P_\alpha\nu_0,
$$
for any $\nu \in \mathcal M_1^+(\mathbb R^d)$. Applying this with $\nu=\bar\mu$, we get
$$
(\overline{P_\alpha\mu})_0
=
(P_\alpha\bar\mu)_0
=
P_\alpha\bar\mu_0,
$$
\end{enumerate}
which completes the proof.
\end{proof}

We next state the Fourier--slice theorem for measures, which gives the exact relation between the Fourier transform and the $k$-plane transform.

\begin{defn}\label{FourierT-def}
The Fourier transform of $\mu \in \mathcal M(\mathbb R^d)$ is defined by
\begin{align}\label{FourierT}
\widehat{\mu}(\xi)
=
\langle \mu, e^{-ix\cdot \xi}\rangle
=
\int_{\mathbb R^d} e^{-ix\cdot \xi}\, d\mu(x),
\qquad \xi \in \mathbb R^d.
\end{align}
\end{defn}
For $\mu \in \mathcal M(\mathbb R^d)$, the function $\widehat{\mu}$ is bounded and uniformly continuous on $\mathbb R^d$, as follows from the finiteness of $|\mu|$ and the dominated convergence theorem. From the definition of the shift operator \eqref{shift}, we also obtain
\begin{align}\label{FourierShift}
\widehat{T_a\mu}(\xi)
=
\langle T_a\mu, e^{-ix\cdot \xi}\rangle
=
\langle \mu, e^{-i(x-a)\cdot \xi}\rangle
=
e^{ia\cdot \xi}\widehat{\mu}(\xi).
\end{align}
For a measure $u_\alpha \in \mathcal M(\alpha^\perp)$, we define its Fourier transform on the fiber $\alpha^\perp$ by
\begin{align}\label{FourierTonG}
\widehat{u_\alpha}(\xi)
=
\langle u_\alpha, e^{-iy\cdot \xi}\rangle,
\qquad \xi \in \alpha^\perp.
\end{align}
Here the last duality pairing is over functions of $y$, with $y \in \alpha^\perp$.

\begin{prop}[Fourier--slice theorem]\label{p:FST}
Let $\mu \in \mathcal M(\mathbb R^d)$ and $1 \le k \le d-1$. Then, for every $\alpha \in G_{k,d}$,
\begin{align}\label{FST}
\widehat{P_\alpha\mu}(\xi)=\widehat{\mu}(\xi),
\qquad \xi \in \alpha^\perp.
\end{align}
\end{prop}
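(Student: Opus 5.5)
The plan is to compute the Fourier transform of $P_\alpha\mu$ directly from the pushforward definition \eqref{kplaneT}, testing against the character $\phi(y)=e^{-iy\cdot\xi}$ on $\alpha^\perp$. The only subtlety is that \eqref{kplaneT} is stated for $\phi\in C_0(\alpha^\perp)$, whereas a character is bounded and continuous but does not vanish at infinity. So the first step is to note that, since $\mu$ is a finite (signed) Radon measure, the pushforward identity
$$
\int_{\alpha^\perp}\phi\,d(P_\alpha\mu)=\int_{\mathbb R^d}\phi\circ\pi_{\alpha^\perp}\,d\mu
$$
extends to all bounded Borel functions $\phi$. This is the abstract change-of-variables formula for pushforward measures: it holds for indicators by definition of $\pi_{\alpha^\perp}\#\mu$, then for simple functions by linearity, and finally for bounded Borel functions by uniform approximation. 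The approximation step uses that $|\mu|$ and $|P_\alpha\mu|\le \pi_{\alpha^\perp}\#|\mu|$ are finite. Alternatively, one can truncate $\phi$ by a cutoff $\chi(\cdot/R)\in C_c$ and let $R\to\infty$ by dominated convergence. The real and imaginary parts are handled separately.

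With this in hand, fix $\xi\in\alpha^\perp$ and apply the identity to $\phi(y)=e^{-iy\cdot\xi}$:
$$
\widehat{P_\alpha\mu}(\xi)=\int_{\mathbb R^d} e^{-i\pi_{\alpha^\perp}(x)\cdot\xi}\,d\mu(x).
$$
The key algebraic observation is that $\pi_{\alpha^\perp}$ is an orthogonal, hence self-adjoint, projection and $\xi\in\alpha^\perp$. Therefore
$$
\pi_{\alpha^\perp}(x)\cdot\xi=x\cdot\pi_{\alpha^\perp}(\xi)=x\cdot\xi.
$$
Equivalently, write $x=\pi_\alpha x+\pi_{\alpha^\perp}x$ and note that $\pi_\alpha x\perp\xi$. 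Substituting gives $\int e^{-ix\cdot\xi}\,d\mu(x)=\widehat\mu(\xi)$ by Definition \ref{FourierT-def}, which is exactly \eqref{FST}.

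The only step needing care is the extension of the pairing from $C_0$ to bounded continuous test functions. I expect no real obstacle there, since finiteness of $\mu$ makes either route (measure-theoretic change of variables, or cutoff plus dominated convergence) routine. The hypothesis $1\le k\le d-1$ only ensures that $\alpha^\perp$ is a nontrivial subspace; it plays no role in the computation.
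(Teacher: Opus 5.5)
Your proposal is correct and follows essentially the same route as the paper: pair the pushforward with the character $e^{-iy\cdot\xi}$ and use that $\pi_{\alpha^\perp}(x)\cdot\xi=x\cdot\xi$ for $\xi\in\alpha^\perp$. The paper applies the pairing to this non-$C_0$ test function without comment. Your justification, via the change-of-variables formula for pushforwards or a cutoff with dominated convergence, correctly fills that small gap.
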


\begin{proof}
If $\xi \in \alpha^\perp$, then
$$
\pi_{\alpha^\perp}(x)\cdot \xi = x\cdot \xi,
\qquad x\in \mathbb R^d.
$$
Hence,
\begin{align*}
\widehat{P_\alpha\mu}(\xi)
=
\langle P_\alpha\mu, e^{-iy\cdot \xi}\rangle 
=
\langle \mu, e^{-i\pi_{\alpha^\perp}(x)\cdot \xi}\rangle 
=
\langle \mu, e^{-ix\cdot \xi}\rangle 
=
\widehat{\mu}(\xi).
\end{align*}
\end{proof}

The Fourier--slice theorem shows that the Fourier transform of the projected measure $P_\alpha\mu$ on $\alpha^\perp$ coincides with the restriction of the Fourier transform of $\mu$ to $\alpha^\perp$. This identity will play a central role in establishing the two-sided estimates for the $k$-plane transform below.

\section{Bi-Lipschitz stability estimates for the $k$-plane transform in Fourier metrics}
The Fourier distance compares probability measures in terms of their Fourier transforms. It is particularly well suited to the $k$-plane transform, since the Fourier-slice theorem identifies the Fourier transform of each projected measure with the restriction of the Fourier transform in the ambient space. We first recall this distance for the probability measures, then extend it to positive measures with arbitrary mass and barycenter.

\begin{prop}[\cite{Carrillo2007}, Proposition 2.6]
Let $s>0$. For any $\mu, \nu \in \mathcal P_s(\mathbb R^d)$ possessing equal moments up to order $\lceil s-1\rceil$,
\begin{align}\label{dist_s}
    d_s(\mu,\nu) = \sup_{\xi \in \mathbb R^d \setminus \{0\}} \frac{|\widehat{\mu}(\xi) - \widehat{\nu}(\xi)|}{|\xi|^s},
\end{align}
defines a distance.
\end{prop}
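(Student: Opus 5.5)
The plan is to show two things: that the supremum in \eqref{dist_s} is finite, and that $d_s$ satisfies the metric axioms on the class of measures in $\mathcal P_s(\mathbb R^d)$ sharing moments up to order $n:=\lceil s-1\rceil$. The metric axioms are the easy part. Symmetry and nonnegativity are immediate from the definition. The triangle inequality follows pointwise in $\xi$ from
$$
|\widehat\mu-\widehat\nu|\le|\widehat\mu-\widehat\sigma|+|\widehat\sigma-\widehat\nu|,
$$
after dividing by $|\xi|^s$ and taking suprema. Definiteness uses injectivity of the Fourier transform on finite measures: if $d_s(\mu,\nu)=0$, then $\widehat\mu=\widehat\nu$ on $\mathbb R^d\setminus\{0\}$, and also at $0$ since both equal $1$. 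Hence $\mu=\nu$, by uniqueness of Fourier--Stieltjes transforms.

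The substantive step is finiteness. Split into $|\xi|\ge1$ and $|\xi|<1$.

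\emph{Large frequencies.} For $|\xi|\ge1$ the quotient is at most $|\widehat\mu|+|\widehat\nu|\le 2$.

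\emph{Small frequencies.} For $|\xi|<1$, Taylor expand $e^{-ix\cdot\xi}$ to order $n$ with remainder. Write $e^{-it}=\sum_{j=0}^{n}\frac{(-it)^j}{j!}+R_n(t)$, where the remainder satisfies
$$
|R_n(t)|\le \min\Bigl\{\tfrac{2|t|^n}{n!},\ \tfrac{|t|^{n+1}}{(n+1)!}\Bigr\}\le C_{n,s}|t|^s .
$$
The last inequality holds because $n\le s\le n+1$ when $s$ is not an integer; when $s$ is an integer, $n=s-1$ and the second bound applies directly. Interpolate as $\min\{a,b\}\le a^{1-\theta}b^\theta$ with $s=n+\theta$. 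Substitute $t=x\cdot\xi$ and integrate against $\mu-\nu$. The polynomial terms are $\int (x\cdot\xi)^j\,d(\mu-\nu)$, which are linear combinations of the moments of order $j\le n$ of $\mu-\nu$, and these vanish by assumption. What remains is
$$
|\widehat\mu(\xi)-\widehat\nu(\xi)|\le C_{n,s}|\xi|^s\int|x|^s\,d(\mu+\nu),
$$
which is finite because $\mu,\nu\in\mathcal P_s$. Dividing by $|\xi|^s$ gives a uniform bound.

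I expect the main obstacle to be the remainder estimate $|R_n(t)|\le C|t|^s$ for the fractional exponent $s$. I would prove it by checking the two regimes directly: the Lagrange bound $|t|^{n+1}/(n+1)!$ handles $|t|\le1$, and the cruder bound $2|t|^n/n!$ handles $|t|\ge1$. I would also verify that the moments of order up to $n$ are finite, which holds because $n\le s$, so these are dominated by the $s$-moment.
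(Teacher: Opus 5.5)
Your argument is correct and complete. The paper does not prove this statement; it quotes it from Carrillo--Toscani, so there is no internal proof to compare with. What you give is the standard Taylor-remainder argument behind the cited result.

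The key points all check out:
\begin{itemize}
\item With $n=\lceil s-1\rceil$ one has $n<s\le n+1$, so the interpolated bound $|R_n(t)|\le C_{n,s}|t|^s$ holds. This includes $n=0$ for $0<s\le 1$, where the moment condition reduces to equal mass.
\item The polynomial terms vanish, because $\int (x\cdot\xi)^j\,d\mu$ is a linear combination of the moments $\int x^\alpha\,d\mu$ with $|\alpha|=j\le n$.
\item Definiteness follows from uniqueness of Fourier--Stieltjes transforms.
\end{itemize}

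Two small remarks.

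First, the split into $|\xi|\ge 1$ and $|\xi|<1$ is unnecessary. Your remainder estimate is global in $\xi$ and gives directly
\[
d_s(\mu,\nu)\le C_{n,s}\int_{\mathbb R^d}|x|^s\,d(\mu+\nu)(x).
\]
For $s=2$ we have $n=1$ and $C_{1,2}=1/2$. For measures with equal barycenters this gives $d_2(\mu,\nu)\le M_2$, with $M_2$ the larger of the two second moments. That is sharper and more direct than the bound $d_2(\mu,\nu)\le 4M_2$ obtained via $W_2$ in the proof of Proposition~\ref{thm:d2W2Holder}.

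Second, for the complex exponential the bound $|t|^{n+1}/(n+1)!$ comes from the integral form of the remainder, not the Lagrange form. The Lagrange form does not hold verbatim for complex-valued functions. This changes only the name you attach to the bound, not the argument.
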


\begin{example}\label{ex:Gaussians_d2}
    As a running example, we consider two Gaussians $\mu_1,\mu_2$, with zero mean and covariance matrices $\Sigma_1,\Sigma_2$. The Fourier transforms of these measures are given by $\hat \mu_i(\xi) = \exp( -\frac{1}{2} \xi^T \Sigma_i \xi)$. We seek to compute $d_2(\mu_1,\mu_2)$, and so by letting $\xi = t \xi_0$ for a unit vector $\xi_0$ we seek to find the supremum, over $t>0$, of
    \[
    \frac{|\exp( -\frac{1}{2} t^2\xi_0^T \Sigma_1 \xi_0) - \exp( -\frac{1}{2}t^2 \xi_0^T \Sigma_2 \xi_0)|}{t^2} = \Big| \frac12 \int_{\xi_0^T \Sigma_1 \xi_0}^{\xi_0^T \Sigma_2 \xi_0} e^{-rt^2/2} dr \Big|.
    \]
   The above quantity is decreasing in $t$, and thus its supremum is $\frac{1}{2}|(\xi_0^T \Sigma_1 \xi_0 - \xi_0^T \Sigma_2 \xi_0)|$. Hence we have
    \[
    d_2(\mu_1,\mu_2) = \frac{1}{2}\sup_{\xi_0 \in \mathbb{S}^{d-1}}  |\xi_0^T (\Sigma_1  - \Sigma_2) \xi_0|.
    \]
    This then immediately gives $d_2(\mu_1,\mu_2) = \frac{1}{2} \|\Sigma_1 - \Sigma_2\|_2$, where by $\|\cdot\|_2$ we mean the matrix $2$-norm. 
\end{example}

In what follows, we restrict our attention to the case $s=2$. Since the measures considered here need not have unit mass or the same barycenter, we augment the above distance by terms accounting for differences in mass and barycenter. Recall that if $\mu \in \mathcal M_1^+(\mathbb R^d)$, then $M_\mu$ and $\vec{m}_\mu$ denote its total mass and barycenter, respectively, and
$$
\bar\mu_0 := \frac{1}{M_\mu} T_{\vec{m}_\mu}\mu,
$$
is the associated probability measure with zero barycenter.

\begin{defn}\label{d:d2tilde}
For $\mu, \nu \in \mathcal M_2^+(\mathbb R^d)$, we define
\begin{align}\label{d2tilde}
    \widetilde d_2(\mu,\nu)
    := d_2(\bar\mu_0,\bar\nu_0) + |\vec{m}_\mu - \vec{m}_\nu|
    + |M_\mu - M_\nu|.
\end{align}
\end{defn}

Since $\bar\mu_0$ and $\bar\nu_0$ are probability measures with zero barycenter, they have equal moments up to order $1$. Hence $d_2(\bar\mu_0,\bar\nu_0)$ is well-defined. Moreover, if $\mu,\nu\in \mathcal P_2(\mathbb R^d)$ and $\vec{m}_\mu = \vec{m}_\nu = \vec{0}$, then
$$
\widetilde d_2(\mu,\nu)=d_2(\mu,\nu).
$$

Furthermore, $\widetilde d_2$ defines a distance on $\mathcal M_2^+(\mathbb R^d)$. Indeed, positivity and symmetry are immediate. If $\widetilde d_2(\mu,\nu)=0$, then $M_\mu=M_\nu$, $\vec{m}_\mu=\vec{m}_\nu$, and $\bar\mu_0=\bar\nu_0$. Since
$$
\mu=M_\mu T_{-\vec{m}_\mu}\bar\mu_0,
$$
we obtain $\mu=\nu$. The triangle inequality follows by applying the triangle inequality to each of the three distance terms in \eqref{d2tilde}.

We remark that the choice of uniform weights in Definition \ref{d:d2tilde} is made only for notational simplicity. More generally, one may define
$$
\widetilde d_{2,a,b,c}(\mu,\nu)
:=
a\,d_2(\bar\mu_0,\bar\nu_0)
+
b|\vec{m}_\mu-\vec{m}_\nu|
+
c|M_\mu-M_\nu|,
\qquad a,b,c>0.
$$
This is again a distance on $\mathcal M_2^+(\mathbb R^d)$. All results below remain valid for this weighted version, with constants depending also on the fixed weights $a,b,c$.

Alternatively, one may use the distance
$$
\widetilde d_{2,a,b,c}^{\,\ell^2}(\mu,\nu)
:=
\left(
a^2d_2(\bar\mu_0,\bar\nu_0)^2
+
b^2|\vec{m}_\mu-\vec{m}_\nu|^2
+
c^2|M_\mu-M_\nu|^2
\right)^{1/2}.
$$
The corresponding statements and proofs are analogous. In the sequel we use the unweighted $\ell^1$-type version in Definition \ref{d:d2tilde} to keep the notation and estimates simple.

For probability measures, a related translated Fourier-based distance was introduced in \cite{Toscani2020} to compare measures with different barycenters. In contrast, we work with positive Radon measures of arbitrary finite total mass and combine the centered Fourier distance with the barycenter and mass differences. This formulation is well suited to the $k$-plane transform, since normalization, recentering, barycenters, and total mass interact naturally with projection, as shown in Proposition \ref{p:kplaneProperties}.

\begin{defn}\label{d:k-planeDist}
For $\mu, \nu \in \mathcal M_2^+(\mathbb R^d)$, we define the distance between their $k$-plane transforms by
\begin{align}\label{k-planeDist}
    D(P\mu,P\nu) := \sup_{\alpha \in G_{k,d}} \widetilde d_2(P_\alpha\mu,P_\alpha\nu).
\end{align}
\end{defn}

\begin{example}
Continuing as in Example \ref{ex:Gaussians_d2}, let $\mu_1,\mu_2$ be centered
Gaussian measures with covariance matrices $\Sigma_1,\Sigma_2$. For each
$\alpha\in G_{k,d}$, the pushforward of $\mu_i$ under the orthogonal projection
$\pi_{\alpha^\perp}$ is again a centered Gaussian measure on $\alpha^\perp$.
If $\Pi_{\alpha^\perp}$ denotes the orthogonal projection matrix onto
$\alpha^\perp$, its covariance, viewed as an operator on the embedded subspace
$\alpha^\perp\subset \mathbb R^d$, is
$\Pi_{\alpha^\perp}\Sigma_i\Pi_{\alpha^\perp}.$
Therefore, by the same computation as in Example \ref{ex:Gaussians_d2},
\[
D(P\mu_1,P\mu_2)
=
\frac12
\sup_{\alpha\in G_{k,d}}
\left\|
\Pi_{\alpha^\perp}(\Sigma_1-\Sigma_2)\Pi_{\alpha^\perp}
\right\|_2 .
\]
Since $\Pi_{\alpha^\perp}$ is an orthogonal projection, this immediately gives
\[
D(P\mu_1,P\mu_2)\le d_2(\mu_1,\mu_2).
\]
On the other hand, because $\Sigma_1-\Sigma_2$ is symmetric, its spectral norm
is attained on an eigenvector corresponding to an eigenvalue of largest
absolute value. Choosing $\alpha$ so that $\alpha^\perp$ contains this
eigenvector yields equality. Hence, in this Gaussian setting,
\[
D(P\mu_1,P\mu_2)=d_2(\mu_1,\mu_2).
\]
\end{example}

We now show, as a consequence of the Fourier--slice theorem, that the distance $D(P\mu,P\nu)$ is equivalent to the Fourier-based distance $\widetilde d_2(\mu,\nu)$. We first prove the result for probability measures with zero barycenter and then extend it to general measures in $\mathcal M_2^+(\mathbb R^d)$.

\begin{prop}\label{p:PdistEqWass}
Let $1 \le k \le d-1$ be an integer. For any $\mu,\nu \in \mathcal P_2(\mathbb R^d)$ with
$\vec{m}_\mu=\vec{m}_\nu=\vec{0}$,
\begin{align}\label{eq:proj-eq-d2}
    D(P\mu,P\nu)
    =
    \sup_{\alpha \in G_{k,d}} d_2(P_\alpha\mu,P_\alpha\nu)
    =
    d_2(\mu,\nu).
\end{align}
\end{prop}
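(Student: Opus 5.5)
The plan is to handle the two equalities in \eqref{eq:proj-eq-d2} separately: the first is a bookkeeping identity for $\widetilde d_2$, and the second follows from the Fourier--slice theorem together with a covering argument for $\mathbb R^d$ by the subspaces $\alpha^\perp$.

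For the first equality, fix $\alpha\in G_{k,d}$. By Proposition \ref{p:kplaneProperties}(i) we have $M_{P_\alpha\mu}=M_\mu=1$, and likewise for $\nu$. By part (iii), $\vec{m}_{P_\alpha\mu}=\pi_{\alpha^\perp}(\vec 0)=\vec 0$, and similarly for $\nu$. By part (ii), $P_\alpha\mu,P_\alpha\nu\in\mathcal P_2(\alpha^\perp)$. Part (v) then gives $\overline{(P_\alpha\mu)}_0=P_\alpha\bar\mu_0=P_\alpha\mu$, and the same for $\nu$. Hence the barycenter and mass terms in Definition \ref{d:d2tilde} vanish, and
$$
\widetilde d_2(P_\alpha\mu,P_\alpha\nu)=d_2(P_\alpha\mu,P_\alpha\nu),
$$
where $d_2$ is computed on $\alpha^\perp$ (identified with $\mathbb R^{d-k}$ through an orthonormal basis; since $d_2$ involves only $|\xi|$ and Fourier transforms, the choice of basis is irrelevant). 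Taking the supremum over $\alpha$ yields the first equality.

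For the second equality, I use Proposition \ref{p:FST}: for $\xi\in\alpha^\perp\setminus\{0\}$,
$$
\frac{|\widehat{P_\alpha\mu}(\xi)-\widehat{P_\alpha\nu}(\xi)|}{|\xi|^2}=\frac{|\widehat\mu(\xi)-\widehat\nu(\xi)|}{|\xi|^2}.
$$
Therefore $d_2(P_\alpha\mu,P_\alpha\nu)$ is the supremum of the ambient quotient over $\xi\in\alpha^\perp\setminus\{0\}$, which is at most $d_2(\mu,\nu)$. This gives the inequality ``$\le$''.

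For ``$\ge$'', given any $\xi\in\mathbb R^d\setminus\{0\}$, I need some $\alpha\in G_{k,d}$ with $\xi\in\alpha^\perp$. Since $1\le k\le d-1$, the space $\xi^\perp$ has dimension $d-1\ge k$, so I can choose a $k$-dimensional subspace $\alpha\subset\xi^\perp$; then $\xi\in\alpha^\perp$. This shows $\bigcup_\alpha\alpha^\perp=\mathbb R^d$, so the supremum over $\alpha$ of the restricted suprema equals the full supremum defining $d_2(\mu,\nu)$.

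The only step needing care is the first one: checking that the projected measures genuinely remain centered probability measures with finite second moment, so that $d_2$ on $\alpha^\perp$ is well defined and coincides with $\widetilde d_2$. Proposition \ref{p:kplaneProperties} covers all of this directly. Everything else is a direct supremum manipulation.
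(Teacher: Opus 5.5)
Your proposal is correct and follows essentially the same route as the paper: Proposition \ref{p:kplaneProperties} reduces $\widetilde d_2(P_\alpha\mu,P_\alpha\nu)$ to $d_2(P_\alpha\mu,P_\alpha\nu)$, and then the Fourier--slice theorem together with $\bigcup_{\alpha}(\alpha^\perp\setminus\{0\})=\mathbb R^d\setminus\{0\}$ yields the second equality. Your explicit justification of this covering, by choosing a $k$-dimensional $\alpha\subset\xi^\perp$, is a detail the paper leaves implicit at this point and only spells out later in the proof of Theorem \ref{t:kplaneEq_d2_general}.
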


\begin{proof}
Let $\mu,\nu \in \mathcal P_2(\mathbb R^d)$ with $\vec{m}_\mu=\vec{m}_\nu=\vec{0}$. By Proposition \ref{p:kplaneProperties}, for every $\alpha \in G_{k,d}$ we have
$$
P_\alpha\mu,\,P_\alpha\nu \in \mathcal P_2(\alpha^\perp)
\quad \text{and} \quad
\vec{m}_{P_\alpha\mu}=\vec{m}_{P_\alpha\nu}=\vec{0}.
$$
Hence, by Definition \ref{d:k-planeDist},
$$
D(P\mu,P\nu)=\sup_{\alpha \in G_{k,d}} d_2(P_\alpha\mu,P_\alpha\nu).
$$
Using the definition of $d_2$ together with the Fourier--slice theorem (Proposition \ref{p:FST}), we obtain
\begin{align*}
D(P\mu,P\nu)
&=
\sup_{\alpha \in G_{k,d}} \sup_{\xi \in \alpha^\perp \setminus \{0\}}
\frac{|\widehat{P_\alpha\mu}(\xi)-\widehat{P_\alpha\nu}(\xi)|}{|\xi|^2} \\
&=
\sup_{\alpha \in G_{k,d}} \sup_{\xi \in \alpha^\perp \setminus \{0\}}
\frac{|\widehat\mu(\xi)-\widehat\nu(\xi)|}{|\xi|^2}.
\end{align*}
Since
$$
\mathbb R^d \setminus \{0\}
=
\bigcup_{\alpha \in G_{k,d}} \left(\alpha^\perp \setminus \{0\}\right),
$$
it follows that
\begin{align*}
D(P\mu,P\nu)
&=
\sup_{\xi \in \mathbb R^d \setminus \{0\}}
\frac{|\widehat\mu(\xi)-\widehat\nu(\xi)|}{|\xi|^2}
=
d_2(\mu,\nu).
\end{align*}
This proves the claim.
\end{proof}

\begin{thm}\label{t:kplaneEq_d2_general}
Let $\mu,\nu \in \mathcal M_2^+(\mathbb R^d)$. Then
\begin{align}\label{eq:biLip-d2}
\frac{1}{2}\,\widetilde d_2(\mu,\nu)
\le
D(P\mu,P\nu)
\le
\widetilde d_2(\mu,\nu).
\end{align}
\end{thm}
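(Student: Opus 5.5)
The plan is to compute $\widetilde d_2(P_\alpha\mu,P_\alpha\nu)$ explicitly for each fixed $\alpha\in G_{k,d}$, using Proposition \ref{p:kplaneProperties}, and then take suprema term by term.

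\textbf{Step 1: the formula for fixed $\alpha$.} By Proposition \ref{p:kplaneProperties}(ii), $P_\alpha\mu,P_\alpha\nu\in\mathcal M_2^+(\alpha^\perp)$, so $\widetilde d_2(P_\alpha\mu,P_\alpha\nu)$ is defined. By part (v), $(\overline{P_\alpha\mu})_0=P_\alpha\bar\mu_0$. By part (iii), $\vec m_{P_\alpha\mu}=\pi_{\alpha^\perp}(\vec m_\mu)$. By part (i), $M_{P_\alpha\mu}=M_\mu$. The same holds for $\nu$. Substituting into Definition \ref{d:d2tilde} gives
\begin{align*}
\widetilde d_2(P_\alpha\mu,P_\alpha\nu)
= d_2(P_\alpha\bar\mu_0,P_\alpha\bar\nu_0)
+ |\pi_{\alpha^\perp}(\vec m_\mu-\vec m_\nu)|
+ |M_\mu-M_\nu|.
\end{align*}
Write $A:=d_2(\bar\mu_0,\bar\nu_0)$ and $B:=|\vec m_\mu-\vec m_\nu|+|M_\mu-M_\nu|$, so that $\widetilde d_2(\mu,\nu)=A+B$.

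\textbf{Step 2: the upper bound.} Since $\bar\mu_0,\bar\nu_0\in\mathcal P_2(\mathbb R^d)$ are centered, Proposition \ref{p:PdistEqWass} gives $\sup_\alpha d_2(P_\alpha\bar\mu_0,P_\alpha\bar\nu_0)=A$. In particular each first term is at most $A$. Orthogonal projections are contractions, so $|\pi_{\alpha^\perp}(\vec m_\mu-\vec m_\nu)|\le|\vec m_\mu-\vec m_\nu|$. Hence every $\widetilde d_2(P_\alpha\mu,P_\alpha\nu)\le A+B$, and taking the supremum over $\alpha$ yields $D(P\mu,P\nu)\le\widetilde d_2(\mu,\nu)$.

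\textbf{Step 3: the lower bound.} I will show that $D(P\mu,P\nu)\ge A$ and $D(P\mu,P\nu)\ge B$ separately. Then
\begin{align*}
D(P\mu,P\nu)\ge\max\{A,B\}\ge\tfrac12(A+B)=\tfrac12\,\widetilde d_2(\mu,\nu).
\end{align*}
The first inequality follows by dropping the nonnegative barycenter and mass terms and invoking Proposition \ref{p:PdistEqWass} again.

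For the second, the only point needing care is that the full barycenter difference must be recoverable from a single projection. Since $1\le k\le d-1$, we have $\dim\alpha^\perp=d-k\ge1$. We may therefore choose $\alpha$ with $\vec m_\mu-\vec m_\nu\in\alpha^\perp$, taking $\alpha$ arbitrary if the difference is zero. For this $\alpha$ the projection fixes the barycenter difference, so $\widetilde d_2(P_\alpha\mu,P_\alpha\nu)\ge B$.

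The individual suprema need not be attained at the same $\alpha$. This is exactly why the constant is $\tfrac12$ rather than $1$, and I expect no genuine obstacle beyond this observation.
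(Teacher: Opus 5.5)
Your proposal is correct and follows essentially the same route as the paper. Both use the per-$\alpha$ formula from Proposition \ref{p:kplaneProperties}, Proposition \ref{p:PdistEqWass} applied to $\bar\mu_0,\bar\nu_0$, and the choice of $\alpha$ with $\vec m_\mu-\vec m_\nu\in\alpha^\perp$. Your bound $D(P\mu,P\nu)\ge\max\{A,B\}\ge\tfrac12(A+B)$ is just a repackaging of the paper's inequality $\sup_\alpha a_\alpha+\sup_\alpha b_\alpha\le 2\sup_\alpha(a_\alpha+b_\alpha)$ for nonnegative $a_\alpha,b_\alpha$.
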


\begin{proof}
We begin with the upper bound. By definition of $\td_2$ and Proposition \ref{p:kplaneProperties}, for every $\alpha \in G_{k,d}$,
\begin{align*}
\widetilde d_2(P_\alpha\mu,P_\alpha\nu)
&=
d_2\bigl((\overline{P_\alpha\mu})_0,(\overline{P_\alpha\nu})_0\bigr)
+
|\vec{m}_{P_\alpha\mu}-\vec{m}_{P_\alpha\nu}|
+
|M_{P_\alpha\mu}-M_{P_\alpha\nu}| \\
&=
d_2(P_\alpha\bar\mu_0,P_\alpha\bar\nu_0)
+
|\pi_{\alpha^\perp}(\vec{m}_\mu-\vec{m}_\nu)|
+
|M_\mu-M_\nu|.
\end{align*}
Taking the supremum over $\alpha \in G_{k,d}$, we obtain
\begin{align*}
D(P\mu,P\nu)
&=
\sup_{\alpha \in G_{k,d}} \widetilde d_2(P_\alpha\mu,P_\alpha\nu) \\
&\le
\sup_{\alpha \in G_{k,d}} d_2(P_\alpha\bar\mu_0,P_\alpha\bar\nu_0)
+
\sup_{\alpha \in G_{k,d}} |\pi_{\alpha^\perp}(\vec{m}_\mu-\vec{m}_\nu)|
+
|M_\mu-M_\nu|.
\end{align*}
Now
$$
\sup_{\alpha \in G_{k,d}} |\pi_{\alpha^\perp}(v)| = |v|, \qquad v\in \mathbb R^d,
$$
since $ |\pi_{\alpha^\perp}(v)| \le |v| $ for every $\alpha\in G_{k,d}$, and because $1 \le k \le d-1$, one can choose $\alpha \subset v^\perp$, in which case $v\in \alpha^\perp$ and therefore $\pi_{\alpha^\perp}(v) = v$.

Moreover, by Proposition \ref{p:PdistEqWass},
$$
\sup_{\alpha \in G_{k,d}} d_2(P_\alpha\bar\mu_0,P_\alpha\bar\nu_0)
=
d_2(\bar\mu_0,\bar\nu_0).
$$
Therefore,
\begin{align*}
D(P\mu,P\nu)
&\le
d_2(\bar\mu_0,\bar\nu_0)
+
|\vec{m}_\mu-\vec{m}_\nu|
+
|M_\mu-M_\nu|
=
\widetilde d_2(\mu,\nu).
\end{align*}

We next prove the lower bound in \eqref{eq:biLip-d2}. Since
$$
d_2(P_\alpha\bar\mu_0,P_\alpha\bar\nu_0) \ge 0
\qquad\text{and}\qquad
|\pi_{\alpha^\perp}(\vec{m}_\mu-\vec{m}_\nu)| \ge 0,
$$
for every $\alpha\in G_{k,d}$, we have
\begin{align*}
&\sup_{\alpha\in G_{k,d}} d_2(P_\alpha\bar\mu_0,P_\alpha\bar\nu_0)
+
\sup_{\alpha\in G_{k,d}} |\pi_{\alpha^\perp}(\vec{m}_\mu-\vec{m}_\nu)| \\
&\qquad\le
2\sup_{\alpha\in G_{k,d}}
\left(
d_2(P_\alpha\bar\mu_0,P_\alpha\bar\nu_0)
+
|\pi_{\alpha^\perp}(\vec{m}_\mu-\vec{m}_\nu)|
\right).
\end{align*}
Using this together with Proposition \ref{p:PdistEqWass} and definition of $\td_2$, we obtain
\begin{align*}
\widetilde d_2(\mu,\nu)
&=
d_2(\bar\mu_0,\bar\nu_0)
+
|\vec{m}_\mu-\vec{m}_\nu|
+
|M_\mu-M_\nu| \\
&=
\Bigg(
\sup_{\alpha\in G_{k,d}} d_2(P_\alpha\bar\mu_0,P_\alpha\bar\nu_0)
+
\sup_{\alpha\in G_{k,d}} |\pi_{\alpha^\perp}(\vec{m}_\mu-\vec{m}_\nu)|
\Bigg)
+
|M_\mu-M_\nu| \\
&\le
2\sup_{\alpha\in G_{k,d}}
\left(
d_2(P_\alpha\bar\mu_0,P_\alpha\bar\nu_0)
+
|\pi_{\alpha^\perp}(\vec{m}_\mu-\vec{m}_\nu)|
+
|M_\mu-M_\nu|\right)\\
& = 2\sup_{\alpha\in G_{k,d}} \widetilde d_2(P_\alpha\mu,P_\alpha\nu)\\
& = 2\,D(P\mu,P\nu),
\end{align*}
by the definition of the distances $\td_2$ and $D$. Hence,
$$
D(P\mu,P\nu)\ge \frac{1}{2}\,\widetilde d_2(\mu,\nu),
$$
which completes the proof.
\end{proof}

\section{H\"older-type stability estimates for the $k$-plane transform in $W_2$-type metrics}

In the previous section, we showed that the distance $D(P\mu,P\nu)$ on $k$-plane data is bi-Lipschitz equivalent to the generalized Fourier distance $\widetilde d_2(\mu,\nu)$. We now combine this fact with a H\"older-type comparison between the Fourier distance $d_2$ and the Wasserstein distance $W_2$ due to Carrillo and Toscani \cite{Carrillo2007}. This yields a corresponding H\"older type estimate for the $k$-plane transform in terms of a generalized Wasserstein distance.

\begin{defn}
For $\mu,\nu \in \mathcal P_2(\mathbb R^d)$, the $2$-Wasserstein distance is defined by
\begin{align}
W_2(\mu,\nu)
:=
\inf_{\pi \in \Pi(\mu,\nu)}
\left(
\int_{\mathbb R^d \times \mathbb R^d} |x-y|^2\, d\pi(x,y)
\right)^{1/2},
\end{align}
where $\Pi(\mu,\nu)$ denotes the set of probability measures on $\mathbb R^d \times \mathbb R^d$ whose first marginal is $\mu$ and whose second marginal is $\nu$.
\end{defn}

\begin{example}\label{ex:Gaussians_W2}
    Continuing as in Example \ref{ex:Gaussians_d2}, it is well-known that the $W_2$ distance between two Gaussians with the same mean is given by
    \[
    W_2^2(\mu_1,\mu_2) = \mathrm{Tr}(\Sigma_1 + \Sigma_2 - 2(\Sigma_1^{1/2}\Sigma_2 \Sigma_1^{1/2})^{1/2}).
    \]
    This formula defines a Riemannian metric on positive definite matrices, known as the Bures-Wasserstein metric, and gives an explicit example of a smooth submanifold of $\mathcal P_2(\mathbb R^d)$ equipped with the $W_2$ distance.
\end{example}

As in the previous section, we extend the $W_2$ distance to positive Radon measures with arbitrary mass and barycenter.

\begin{defn}\label{def_W2tilde}
For $\mu,\nu \in \mathcal M_2^+(\mathbb R^d)$, we define
\begin{align}\label{W2tilde}
\widetilde W_2(\mu,\nu)
:=
W_2(\bar\mu_0,\bar\nu_0)
+
|\vec{m}_\mu-\vec{m}_\nu|
+
|M_\mu-M_\nu|.
\end{align}
\end{defn}

Since $W_2$ is a distance on $\mathcal P_2(\mathbb R^d)$, the same argument as for $\widetilde d_2$ shows that $\widetilde W_2$ defines a distance on $\mathcal M_2^+(\mathbb R^d)$. Additionally, if $\mu,\nu \in \mathcal P_2(\mathbb R^d)$ and $\vec{m}_\mu=\vec{m}_\nu=\vec{0}$, then
$$
\widetilde W_2(\mu,\nu)=W_2(\mu,\nu).
$$

We will use the following H\"older-type comparison between the Fourier distance $d_2$ and the Wasserstein distance $W_2$ by Carrillo and Toscani \cite{Carrillo2007}.

\begin{prop}[Comparison between $d_2$ and $W_2$] \label{thm:d2W2Holder}
Let $d \ge 1$ and $\rho>0$. Suppose that $\mu,\nu \in \mathcal P_{2+\rho}(\mathbb R^d)$ satisfy
$$
\vec{m}_\mu=\vec{m}_\nu.
$$
Assume moreover that
$$
\langle \mu, |x|^{2+\rho} \rangle \le M,
\qquad
\langle \nu, |x|^{2+\rho} \rangle \le M,
$$
for some $M>0$. Then there exist constants $c,C>0$ and an exponent $0<q \le 1/2$, depending only on $d,\rho,M$, such that
\begin{align}\label{W2d2Holder}
c\, d_2(\mu,\nu)\le W_2(\mu,\nu)\le C\, d_2(\mu,\nu)^q.
\end{align}
\end{prop}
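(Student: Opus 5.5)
We prove the two inequalities in \eqref{W2d2Holder} separately. The lower bound $c\,d_2\le W_2$ is a Lipschitz estimate of the characteristic functions against a coupling. The upper bound $W_2\le C d_2^q$ goes through a smoothing argument, using the moment bound to control the tails.

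\emph{Lower bound.} Let $\pi\in\Pi(\mu,\nu)$ be an optimal coupling. Write
$$
\widehat\mu(\xi)-\widehat\nu(\xi)=\int \bigl(e^{-ix\cdot\xi}-e^{-iy\cdot\xi}\bigr)\,d\pi(x,y).
$$
We Taylor expand to second order using the equal barycenters. With $\vec m_\mu=\vec m_\nu$, the first-order term $-i\int (x-y)\cdot\xi\,d\pi$ vanishes. The remainder is controlled by $|\xi|^2\int |x-y|(|x|+|y|)\,d\pi$, using $|e^{-ia}-e^{-ib}+i(a-b)|\le \tfrac12|a-b|(|a|+|b|)$. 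Cauchy--Schwarz then gives
$$
|\widehat\mu(\xi)-\widehat\nu(\xi)|\le C|\xi|^2\, W_2(\mu,\nu)\,\bigl(m_2(\mu)^{1/2}+m_2(\nu)^{1/2}\bigr),
$$
where the second moments are bounded in terms of $M$ and the unit mass. Dividing by $|\xi|^2$ and taking the supremum yields $d_2\le C' W_2$.

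\emph{Upper bound.} Use the dual representation of $W_2$, or equivalently the bound $W_2^2\le C\,W_1$ type interpolation. More concretely, I would bound $W_2^2(\mu,\nu)$ by a weighted total variation:
$$
W_2^2(\mu,\nu)\le 2\int |x|^2\,d|\mu-\nu|(x)\quad\text{(Villani, Thm.~6.15)}.
$$
This is too crude directly, so I would instead mollify. Let $\gamma_\varepsilon$ be a Gaussian with variance $\varepsilon^2$. Then $W_2(\mu,\mu*\gamma_\varepsilon)\le C\varepsilon$, so it suffices to bound $W_2(\mu*\gamma_\varepsilon,\nu*\gamma_\varepsilon)$. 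Split $\mathbb R^d$ into $|x|\le R$ and $|x|>R$. On the tail, $\int_{|x|>R}|x|^2\,d(\mu+\nu)\le 2M R^{-\rho}$. On the ball, write $h=(\mu-\nu)*\gamma_\varepsilon$ and estimate
$$
\int_{|x|\le R}|x|^2|h|\,dx\le R^{2+d/2}\|h\|_{L^2},
\qquad
\|h\|_{L^2}^2=\int|\widehat\mu-\widehat\nu|^2e^{-\varepsilon^2|\xi|^2}\,d\xi\le d_2^2\int|\xi|^4e^{-\varepsilon^2|\xi|^2}\,d\xi = C d_2^2\varepsilon^{-d-4}.
$$
Combining the three pieces gives
$$
W_2^2\lesssim \varepsilon^2+R^{-\rho}+R^{2+d/2}\,d_2\,\varepsilon^{-d/2-2}.
$$

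\emph{Main obstacle: optimizing the parameters.} The last step, and the one needing care, is choosing $\varepsilon$ and $R$. I would take both as powers of $d_2$, namely $\varepsilon=d_2^{a}$ and $R=d_2^{-b}$, and balance the three terms. This yields $W_2^2\le C d_2^{2q}$ for some explicit $q=q(d,\rho)>0$. One must check that $q\le 1/2$ can be arranged, which is possible since decreasing $q$ only weakens the bound, given that $d_2$ is bounded by a constant depending on $M$. Finally, the case of large $d_2$ is trivial, because $W_2$ is bounded by the moment bound. Constants depend only on $d,\rho,M$, as required.
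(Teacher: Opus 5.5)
Your proof is correct, and it takes a different route from the paper's. The paper imports both inequalities from Carrillo--Toscani:
\begin{itemize}
\item for the lower bound it uses their Proposition~2.12, $d_2\le\frac12W_2^2+\min\{m_2(\mu)^{1/2},m_2(\nu)^{1/2}\}W_2$, and then linearizes with $W_2^2\le 4M_2$;
\item for the upper bound it uses their Corollary~2.17, which bounds $W_2$ by a maximum of several powers of $d_2$ times moment factors.
\end{itemize}
The paper's remaining work is absorbing the moment factors and collapsing the maximum of powers to the smallest exponent, using $d_2\le 4M_2$.

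Your lower bound is essentially a direct proof of Proposition~2.12, and it yields the linear estimate $d_2\le\frac12\bigl(m_2(\mu)^{1/2}+m_2(\nu)^{1/2}\bigr)W_2$ at once. Your upper bound replaces Corollary~2.17 by a self-contained argument: Gaussian mollification, Villani's weighted total-variation bound, a ball/tail split, and Plancherel. The balancing you flag as the main obstacle is immediate. For $d_2\le 1$, take $\varepsilon=d_2^{a}$ and $R=d_2^{-a}$ with $a=\frac{1}{2(d+4)}$. The three terms become $d_2^{2a}$, $d_2^{a\rho}$ and $d_2^{1/2}$, so $q=\frac{\min\{1,\rho/2\}}{2(d+4)}$ works. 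This $q$ is explicit, depends only on $d$ and $\rho$, and is plainly at most $1/2$. The paper, by contrast, must remark that the constants in Carrillo--Toscani have to be tracked to reach $q\le 1/2$. The citation buys brevity; your route buys an explicit exponent and no black box.

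Two small fixes:
\begin{itemize}
\item You apply Villani's bound to $\mu*\gamma_\varepsilon$ and $\nu*\gamma_\varepsilon$, so the tail term must be estimated for the mollified measures, not for $\mu+\nu$. Their $(2+\rho)$-moments are bounded by $C(d,\rho)\bigl(M+\varepsilon^{2+\rho}\bigr)$, which is harmless for $\varepsilon\le 1$.
\item The aside about a bound of the form $W_2^2\le C\,W_1$ is false without bounded support. You never use it, so just drop it.
\end{itemize}
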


\begin{proof}
By Proposition 2.12 in \cite{Carrillo2007}, if $\mu$ and $\nu$ have equal barycenters, then
$$
d_2(\mu,\nu)
\le \frac12 W_2(\mu,\nu)^2
   + \min\!\left\{
      \langle \mu, |x|^2 \rangle^{1/2},
      \langle \nu, |x|^2 \rangle^{1/2}
     \right\}W_2(\mu,\nu).
$$
Set
$$
M_2:=\max\left\{\langle \mu, |x|^2 \rangle,\langle \nu, |x|^2 \rangle\right\}.
$$
Since $\mu,\nu \in \mathcal P_{2+\rho}(\mathbb R^d)$ and their $(2+\rho)$-moments are bounded by $M$, it follows that $M_2$ is bounded by a constant depending only on $\rho$ and $M$. Moreover,
$$
W_2(\mu,\nu)^2
\le 2\langle \mu, |x|^2 \rangle + 2\langle \nu, |x|^2 \rangle
\le 4M_2.
$$
Substituting this into the preceding estimate yields
$$
d_2(\mu,\nu)\le 2\sqrt{M_2}\,W_2(\mu,\nu),
$$
and therefore
$$
W_2(\mu,\nu)\ge \frac{1}{2\sqrt{M_2}}\,d_2(\mu,\nu).
$$
Since $M_2$ is controlled by $\rho$ and $M$, this gives the left inequality in \eqref{W2d2Holder}.

For the reverse inequality, we use Corollary 2.17 in \cite{Carrillo2007}, which yields
\begin{align*}
W_2(\mu,\nu)
\le
C_0
&\Bigl(
\max\{d_2(\mu,\nu),\, M_2^{\gamma_1} d_2(\mu,\nu)^{\beta_1},\, M_2^{\gamma_2} d_2(\mu,\nu)^{\beta_2},\, d_2(\mu,\nu)^{\beta_3}\}
\Bigr)^{\gamma_3}\\
&\times 
\Bigl(\max\{\langle \mu,|x|^{2+\rho}\rangle,\langle \nu,|x|^{2+\rho}\rangle\}\Bigr)^{1-\gamma_3},
\end{align*}
for some constant $C_0>0$ and exponents
$
\gamma_1,\gamma_2,\beta_1,\beta_2,\beta_3>0,
$
and $0 < \gamma_3 \le 1/2$, depending only on $d$ and $\rho$.

By the assumed $(2+\rho)$-moment bound, both $M_2$ and
$$
\max\{\langle \mu,|x|^{2+\rho}\rangle,\langle \nu,|x|^{2+\rho}\rangle\}
$$
are bounded by constants depending only on $\rho$ and $M$. Hence all moment factors can be absorbed into a new constant $C=C(d,\rho,M)$, and therefore
$$
W_2(\mu,\nu)
\le
C\,
\max\{d_2(\mu,\nu)^{\gamma_3},\, d_2(\mu,\nu)^{\beta_1\gamma_3},\, d_2(\mu,\nu)^{\beta_2\gamma_3},\, d_2(\mu,\nu)^{\beta_3\gamma_3}\}.
$$

It remains to reduce the maximum of powers to a single Hölder exponent.
Let
\[
\alpha_0:=\gamma_3,\qquad
\alpha_1:=\beta_1\gamma_3,\qquad
\alpha_2:=\beta_2\gamma_3,\qquad
\alpha_3:=\beta_3\gamma_3,
\]
and set
\[
q:=\min\{\alpha_0,\alpha_1,\alpha_2,\alpha_3\}
=\gamma_3\min\{1,\beta_1,\beta_2,\beta_3\}.
\]
Since $0<\gamma_3 \le 1/2$ and $\beta_1,\beta_2,\beta_3>0$, we have $0<q \le 1/2$.

We next observe that $d_2(\mu,\nu)$ is uniformly bounded under the present assumptions. Indeed, from the previous estimates,
\[
W_2(\mu,\nu)^2\le 4M_2
\]
and
\[
d_2(\mu,\nu)\le 2\sqrt{M_2}\,W_2(\mu,\nu).
\]
Hence
\[
d_2(\mu,\nu)\le 4M_2.
\]
Therefore, since $\alpha_j-q\ge 0$ for every $j=0,1,2,3$, we obtain
\[
\begin{aligned}
\max_{0\le j\le 3} d_2(\mu,\nu)^{\alpha_j}
&\le d_2(\mu,\nu)^q
\max_{0\le j\le 3} d_2(\mu,\nu)^{\alpha_j-q}  \\
&\le d_2(\mu,\nu)^q
\max_{0\le j\le 3} \max\{1,4M_2\}^{\alpha_j-q}.
\end{aligned}
\]
Since $M_2$ is bounded by a constant depending only on $\rho$ and $M$, the last factor is bounded by a constant depending only on $d,\rho,M$. After absorbing this factor into the constant $C=C(d,\rho,M)$, we obtain
\[
W_2(\mu,\nu)\le C d_2(\mu,\nu)^q,
\]
which is the right inequality in \eqref{W2d2Holder}.
\end{proof}

We remark that in the previous proof we utilize H\"older comparison results from \cite{Carrillo2007}. In the statements above we state that $0 \leq \gamma_3,q \leq 1/2$, which is more pessimistic than the result given in \cite{Carrillo2007}, who state the existence of $0 < \gamma_3 < 1$. As this concerns upper bounds, a smaller range is actually a weaker result, but we remark that if constants are tracked in their proof then one actually obtains $0< q \leq 1/2$. The exponent $q=1/2$ actually appears in certain contexts, as evidenced by the following example.

\begin{example}
    Continuing as in Example \ref{ex:Gaussians_d2}, we let $\Sigma_2 = 0$, corresponding to a Dirac mass at the origin. In this case, we have
    \[
    W_2^2(\mu_1,\mu_2) = \mathrm{Tr}(\Sigma_1) = \sum_{j=1}^d \lambda_j,
    \]
    whereas
    \[
    d_2(\mu_1,\mu_2) = \frac{1}{2}\|\Sigma_1\|_2 =
\frac{1}{2} \max_{1\le j\le d}\lambda_j,
    \]
    where $\lambda_1,\ldots,\lambda_d\ge 0$ denote the eigenvalues of $\Sigma_1$.
    
    Since $\frac{\mathrm{Tr}(\Sigma_1)}{d} \leq \|\Sigma_1\|_2 \leq \mathrm{Tr}(\Sigma_1)$, it follows that
    \[
    \frac{W_2^2(\mu_1,\mu_2)}{2d}
    \leq d_2(\mu_1,\mu_2)
    \leq \frac{W_2^2(\mu_1,\mu_2)}{2}.
    \]
    Thus, in this example, $W_2(\mu_1,\mu_2)$ is comparable to
    $d_2(\mu_1,\mu_2)^{1/2}$, which is compatible with the previous proposition with $q=1/2$.
\end{example}

\begin{remark}[Sharpness of the exponent]
The exponent in \eqref{W2d2Holder} cannot, in general, be taken larger than
$1/2$. Indeed, let
\[
\mu_\varepsilon=\mathcal N(0,\varepsilon I_d),
\qquad
\nu=\delta_0,
\qquad
0<\varepsilon<1.
\]
Then $\vec m_{\mu_\varepsilon}=\vec m_\nu=0$. Also,
\[
\int_{\mathbb R^d}|x|^{2+\rho}\,d\mu_\varepsilon(x)
=
\frac{1}{(2\pi\varepsilon)^{d/2}}
\int_{\mathbb R^d}
|x|^{2+\rho}
\exp\left(-\frac{|x|^2}{2\varepsilon}\right)\,dx.
\]
By the change of variables $x=\sqrt{\varepsilon}\,y$, this becomes
\[
\int_{\mathbb R^d}|x|^{2+\rho}\,d\mu_\varepsilon(x)
=
\varepsilon^{(2+\rho)/2}
\frac{1}{(2\pi)^{d/2}}
\int_{\mathbb R^d}
|y|^{2+\rho}e^{-|y|^2/2}\,dy
=
(2\varepsilon)^{(2+\rho)/2}
\frac{\Gamma\left(\frac{d+\rho+2}{2}\right)}{\Gamma(d/2)},
\]
where $\Gamma$ denotes the Gamma function. In addition,
\[
\int_{\mathbb R^d}|x|^{2+\rho}\,d\nu(x)=0.
\]

Hence, $\mu_\varepsilon,\nu\in\mathcal P_{2+\rho}(\mathbb R^d)$. Moreover, for any fixed $M>0$, the moment assumptions are satisfied for all sufficiently small $\varepsilon$.

On the other hand, by Examples \ref{ex:Gaussians_d2} and \ref{ex:Gaussians_W2},
\[
d_2(\mu_\varepsilon,\nu)
=
\frac12\|\varepsilon I_d\|_2
=
\frac{\varepsilon}{2},
\]
while
\[
W_2^2(\mu_\varepsilon,\nu)
=
\operatorname{Tr}(\varepsilon I_d)
=
\varepsilon d.
\]
Therefore
\[
W_2(\mu_\varepsilon,\nu)
=
\sqrt{2d}\, d_2(\mu_\varepsilon,\nu)^{1/2}.
\]
If an estimate of the form
\[
W_2(\mu,\nu)\le C d_2(\mu,\nu)^q
\]
held uniformly under the assumptions of Proposition \ref{thm:d2W2Holder} for
some $q>1/2$, then applying it to the pair
$(\mu_\varepsilon,\nu)$ would give
\[
\sqrt{2d}
\le
C d_2(\mu_\varepsilon,\nu)^{q-1/2}.
\]
Since $d_2(\mu_\varepsilon,\nu)\to0$ as $\varepsilon\to0$, the right-hand side
tends to zero, a contradiction. Thus the exponent $1/2$ is sharp in this
generality.
\end{remark}

In view of Proposition \ref{p:PdistEqWass}, the preceding result immediately yields an estimate for the $k$-plane transform in terms of the Wasserstein distance.
\begin{corollary}\label{corr:D_Hequiv_W2}
    Let $d \ge 1$ and $\rho>0$. Suppose that $\mu,\nu \in \mathcal P_{2+\rho}(\mathbb R^d)$ satisfy $\vec{m}_\mu=\vec{m}_\nu = 0.$ Assume moreover that
$$
\langle \mu, |x|^{2+\rho} \rangle \le M,
\qquad
\langle \nu, |x|^{2+\rho} \rangle \le M,
$$
for some $M>0$. Then there exist constants $c,C>0$ and an exponent $0<q \le 1/2$, depending only on $d,\rho,M$, such that
\begin{align}\label{eq:D_Hequiv_W2}
c\, W_2(\mu,\nu)^{1/q}\le D(P\mu,P\nu)\le C\, W_2(\mu,\nu).
\end{align}
\end{corollary}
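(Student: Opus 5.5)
The plan is to combine the exact identity of Proposition \ref{p:PdistEqWass} with the Hölder comparison of Proposition \ref{thm:d2W2Holder}. Since $\mu,\nu\in\mathcal P_{2+\rho}(\mathbb R^d)\subset\mathcal P_2(\mathbb R^d)$ are centered, Proposition \ref{p:PdistEqWass} gives
\[
D(P\mu,P\nu)=d_2(\mu,\nu).
\]
The corollary therefore reduces to the two-sided bound
\[
c\,W_2(\mu,\nu)^{1/q}\le d_2(\mu,\nu)\le C\,W_2(\mu,\nu).
\]
The two barycenters coincide (both are $0$), and the $(2+\rho)$-moments are bounded by $M$, so Proposition \ref{thm:d2W2Holder} applies. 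It yields $c_0,C_0>0$ and $0<q\le 1/2$, depending only on $d,\rho,M$, such that
\[
c_0\,d_2(\mu,\nu)\le W_2(\mu,\nu)\le C_0\,d_2(\mu,\nu)^q .
\]

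For the upper bound in \eqref{eq:D_Hequiv_W2}, rearrange the left inequality:
\[
D(P\mu,P\nu)=d_2(\mu,\nu)\le c_0^{-1}W_2(\mu,\nu),
\]
so we may take $C=c_0^{-1}$.

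For the lower bound, raise the right inequality to the power $1/q>0$, which is monotone on $[0,\infty)$:
\[
W_2(\mu,\nu)^{1/q}\le C_0^{1/q}\,d_2(\mu,\nu)=C_0^{1/q}\,D(P\mu,P\nu).
\]
This gives the bound with $c=C_0^{-1/q}$. Both constants and the exponent $q$ depend only on $d,\rho,M$, as claimed.

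There is no serious obstacle; the argument is bookkeeping. The only points to check are that the hypotheses of both invoked propositions hold. Membership in $\mathcal P_2$ follows from the moment-monotonicity remark in Section 2. Equal (zero) barycenters are exactly what Proposition \ref{thm:d2W2Holder} requires. The case $k$ with $1\le k\le d-1$ is implicit in the definition of $D$ and needed for Proposition \ref{p:PdistEqWass}, which covers all of $\mathbb R^d\setminus\{0\}$ by the subspaces $\alpha^\perp$. In the degenerate case $d=1$ no such $k$ exists, so the statement should be read for $d\ge2$.
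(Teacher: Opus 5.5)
Your proof is correct and is the argument the paper intends. The paper states the corollary as an immediate consequence of the identity $D(P\mu,P\nu)=d_2(\mu,\nu)$ from Proposition~\ref{p:PdistEqWass}, combined with the two-sided comparison of Proposition~\ref{thm:d2W2Holder}; the lower bound comes from raising $W_2\le C_0\,d_2^q$ to the power $1/q$. Your side remark that the statement is only meaningful for $d\ge 2$ (so that some $1\le k\le d-1$ exists) is accurate; the paper's ``$d\ge 1$'' glosses over this.
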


Proposition \ref{thm:d2W2Holder} extends to the generalized distances $\widetilde d_2$ and $\widetilde W_2$. Observe that
$$
\langle \bar\mu_0, |x|^{2+\rho}\rangle
=
\frac1{M_\mu}\int_{\mathbb R^d}|x-\vec{m}_\mu|^{2+\rho}\,d\mu(x),
$$
and likewise for $\nu$. Thus the uniform moment assumption on $\bar\mu_0$ and $\bar\nu_0$ may be stated directly in terms of the centered $(2+\rho)$-moments of $\mu$ and $\nu$, normalized by their masses.

\begin{prop}\label{p:td2tW2Holder}
Let $\rho>0$, and let $\mu,\nu \in \mathcal M_{2+\rho}^+(\mathbb R^d)$. Assume that there exists $M>0$ such that
\begin{align*}
\frac1{M_\mu}\int_{\mathbb R^d}|x-\vec{m}_\mu|^{2+\rho}\,d\mu(x)\le M,
\qquad
\frac1{M_\nu}\int_{\mathbb R^d}|x-\vec{m}_\nu|^{2+\rho}\,d\nu(x)\le M.
\end{align*}
Then there exist constants $c,C>0$ and an exponent $0<q \le 1/2$, depending only on $d,\rho,M$, such that
\begin{align}\label{td2tW2Holder}
c\, \widetilde d_2(\mu,\nu)
\le
\widetilde W_2(\mu,\nu)
\le
C\bigl(\widetilde d_2(\mu,\nu)^q+\widetilde d_2(\mu,\nu)\bigr).
\end{align}
\end{prop}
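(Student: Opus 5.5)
The plan is to reduce everything to Proposition \ref{thm:d2W2Holder} applied to the centered normalized measures $\bar\mu_0,\bar\nu_0$, and to carry the barycenter and mass terms along unchanged. First I check that the hypotheses of Proposition \ref{thm:d2W2Holder} hold for the pair $(\bar\mu_0,\bar\nu_0)$. Both are probability measures with barycenter $\vec 0$, so $\vec m_{\bar\mu_0}=\vec m_{\bar\nu_0}$. By the identity preceding the statement,
$$
\langle \bar\mu_0,|x|^{2+\rho}\rangle=\frac1{M_\mu}\int_{\mathbb R^d}|x-\vec m_\mu|^{2+\rho}\,d\mu(x)\le M,
$$
and likewise for $\nu$. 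This identity follows from the definitions of $T_a$ and normalization, with $\varphi(x)=|x|^{2+\rho}$ extended by monotone convergence. Hence $\bar\mu_0,\bar\nu_0\in\mathcal P_{2+\rho}(\mathbb R^d)$ with the required uniform moment bound. Proposition \ref{thm:d2W2Holder} then gives constants $c_0,C_0>0$ and $0<q\le 1/2$, depending only on $d,\rho,M$, such that
$$
c_0\,d_2(\bar\mu_0,\bar\nu_0)\le W_2(\bar\mu_0,\bar\nu_0)\le C_0\,d_2(\bar\mu_0,\bar\nu_0)^q.
$$

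For the lower bound in \eqref{td2tW2Holder}, set $c:=\min\{c_0,1\}$. Adding $|\vec m_\mu-\vec m_\nu|+|M_\mu-M_\nu|$ to both sides of the left inequality yields
$$
c\,\widetilde d_2(\mu,\nu)\le c_0 d_2(\bar\mu_0,\bar\nu_0)+|\vec m_\mu-\vec m_\nu|+|M_\mu-M_\nu|\le \widetilde W_2(\mu,\nu).
$$

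For the upper bound, write $A:=d_2(\bar\mu_0,\bar\nu_0)$ and $B:=|\vec m_\mu-\vec m_\nu|+|M_\mu-M_\nu|$, so that $\widetilde d_2=A+B$ and
$$
\widetilde W_2(\mu,\nu)\le C_0A^q+B.
$$
Since $0<q\le 1$ and $A\le A+B$, monotonicity of $t\mapsto t^q$ gives $A^q\le \widetilde d_2(\mu,\nu)^q$. Trivially $B\le \widetilde d_2(\mu,\nu)$. Hence
$$
\widetilde W_2(\mu,\nu)\le C_0\widetilde d_2(\mu,\nu)^q+\widetilde d_2(\mu,\nu)\le \max\{C_0,1\}\bigl(\widetilde d_2^{\,q}+\widetilde d_2\bigr),
$$
which is the claim with $C:=\max\{C_0,1\}$.

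The only point needing care is that the barycenter and mass terms enter linearly while the centered part enters with exponent $q$. The barycenter and mass differences are not bounded by the centered moment hypothesis, so one cannot absorb $\widetilde d_2$ into $\widetilde d_2^{\,q}$. This is exactly why the statement keeps the sum $\widetilde d_2^{\,q}+\widetilde d_2$ instead of a single power, and why monotonicity of $t\mapsto t^q$ for each separate term is enough. The remaining verification, the moment identity for $\bar\mu_0$, is routine.
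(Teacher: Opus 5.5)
Your proposal is correct and follows the paper's proof essentially step for step. You apply Proposition \ref{thm:d2W2Holder} to $(\bar\mu_0,\bar\nu_0)$, take $c=\min\{c_0,1\}$ for the lower bound, and for the upper bound combine $d_2(\bar\mu_0,\bar\nu_0)^q\le \widetilde d_2(\mu,\nu)^q$ with the linearly bounded barycenter and mass terms.
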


\begin{proof}
Since
\begin{align*}
\bar\mu_0=\frac1{M_\mu}T_{\vec{m}_\mu}\mu,
\qquad
\bar\nu_0=\frac1{M_\nu}T_{\vec{m}_\nu}\nu,
\end{align*}
the assumptions imply that $\bar\mu_0,\bar\nu_0\in \mathcal P_{2+\rho}(\mathbb R^d)$, both have zero barycenter, and satisfy
\begin{align*}
\langle \bar\mu_0,|x|^{2+\rho}\rangle\le M,
\qquad
\langle \bar\nu_0,|x|^{2+\rho}\rangle\le M.
\end{align*}
Hence Proposition \ref{thm:d2W2Holder} applies to the pair $(\bar\mu_0,\bar\nu_0)$. Therefore there exist constants $c_0,C_0>0$ and an exponent $0<q \le 1/2$, depending only on $d,\rho,M$, such that
\begin{align*}
c_0\, d_2(\bar\mu_0,\bar\nu_0)
\le
W_2(\bar\mu_0,\bar\nu_0)
\le
C_0\, d_2(\bar\mu_0,\bar\nu_0)^q.
\end{align*}

For the lower bound, using the definitions of $\td_2$ and $\tW_2$, we obtain
\begin{align*}
\widetilde W_2(\mu,\nu)
&=
W_2(\bar\mu_0,\bar\nu_0)
+
|\vec{m}_\mu-\vec{m}_\nu|
+
|M_\mu-M_\nu| \\
&\ge
c_0\,d_2(\bar\mu_0,\bar\nu_0)
+
|\vec{m}_\mu-\vec{m}_\nu|
+
|M_\mu-M_\nu| \\
&\ge
\min\{c_0,1\}
\left(
d_2(\bar\mu_0,\bar\nu_0)
+
|\vec{m}_\mu-\vec{m}_\nu|
+
|M_\mu-M_\nu|
\right) \\
&=
\min\{c_0,1\}\,\widetilde d_2(\mu,\nu).
\end{align*}
Thus the lower bound holds with $c:=\min\{c_0,1\}$.

For the upper bound, again by the definitions of $\td_2$ and $\tW_2$,
\begin{align*}
\widetilde W_2(\mu,\nu)
&=
W_2(\bar\mu_0,\bar\nu_0)
+
|\vec{m}_\mu-\vec{m}_\nu|
+
|M_\mu-M_\nu| \\
&\le
C_0\,d_2(\bar\mu_0,\bar\nu_0)^q
+
|\vec{m}_\mu-\vec{m}_\nu|
+
|M_\mu-M_\nu|.
\end{align*}
Since
\begin{align*}
d_2(\bar\mu_0,\bar\nu_0)\le \widetilde d_2(\mu,\nu),
\end{align*}
we have
\begin{align*}
d_2(\bar\mu_0,\bar\nu_0)^q
\le
\widetilde d_2(\mu,\nu)^q.
\end{align*}
Moreover,
\begin{align*}
|\vec{m}_\mu-\vec{m}_\nu|
+
|M_\mu-M_\nu|
\le
\widetilde d_2(\mu,\nu).
\end{align*}
Combining these estimates gives
\begin{align*}
\widetilde W_2(\mu,\nu)
&\le
C_0\,\widetilde d_2(\mu,\nu)^q
+
\widetilde d_2(\mu,\nu) 
\le
C\bigl(\widetilde d_2(\mu,\nu)^q+\widetilde d_2(\mu,\nu)\bigr)
\end{align*}
for some constant $C>0$ depending only on $d,\rho,M$. This proves \eqref{td2tW2Holder}.
\end{proof}

Combining Proposition \ref{p:td2tW2Holder} with the bi-Lipschitz estimate of Theorem \ref{t:kplaneEq_d2_general}, we obtain the desired H\"older estimates for the $k$-plane transform with respect to the generalized Wasserstein distance.

\begin{thm}\label{thm:kplaneW2Holder}
Let $\rho>0$, and let $\mu,\nu \in \mathcal M_{2+\rho}^+(\mathbb R^d)$. Assume that there exists $M>0$ such that
\begin{align*}
\frac1{M_\mu}\int_{\mathbb R^d}|x-\vec{m}_\mu|^{2+\rho}\,d\mu(x)\le M,
\qquad
\frac1{M_\nu}\int_{\mathbb R^d}|x-\vec{m}_\nu|^{2+\rho}\,d\nu(x)\le M.
\end{align*}
Then there exist constants $c,C>0$ and an exponent $0<q \le 1/2$, depending only on $d,\rho,M$, such that
\begin{align}\label{eq:kplaneW2Holder}
c\, \min\bigl\{\widetilde W_2(\mu,\nu)^{1/q},\widetilde W_2(\mu,\nu)\bigr\}
\le
D(P\mu,P\nu)
\le
C\, \widetilde W_2(\mu,\nu).
\end{align}
\end{thm}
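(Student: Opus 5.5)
The plan is to chain Theorem \ref{t:kplaneEq_d2_general} with Proposition \ref{p:td2tW2Holder}. Both hypotheses needed are already in place: $\mu,\nu\in\mathcal M_{2+\rho}^+\subset\mathcal M_2^+$, and the normalized centered moment bounds are exactly those of Proposition \ref{p:td2tW2Holder}. Together these give
$$
\tfrac12\widetilde d_2\le D(P\mu,P\nu)\le \widetilde d_2
\qquad\text{and}\qquad
c_0\widetilde d_2\le \widetilde W_2\le C_0\bigl(\widetilde d_2^{\,q}+\widetilde d_2\bigr),
$$
with $c_0,C_0,q$ depending only on $d,\rho,M$.

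The upper bound is immediate. We have $D(P\mu,P\nu)\le \widetilde d_2(\mu,\nu)\le c_0^{-1}\widetilde W_2(\mu,\nu)$, so we may take $C=c_0^{-1}$.

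For the lower bound, we invert the Hölder-type inequality $\widetilde W_2\le C_0(t^q+t)$, where $t:=\widetilde d_2(\mu,\nu)$. Split into two cases.

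\emph{Case $t\le 1$.} Since $q\le 1$, we have $t\le t^q$, so $\widetilde W_2\le 2C_0 t^q$. Hence $t\ge (2C_0)^{-1/q}\widetilde W_2^{1/q}$.

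\emph{Case $t>1$.} Now $t^q\le t$, so $\widetilde W_2\le 2C_0 t$. Hence $t\ge (2C_0)^{-1}\widetilde W_2$.

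In either case
$$
t\ge \min\{(2C_0)^{-1/q},(2C_0)^{-1}\}\,\min\{\widetilde W_2^{1/q},\widetilde W_2\},
$$
because each case bound dominates the corresponding term of the minimum. Combining with $D\ge \tfrac12 t$ gives the left inequality of \eqref{eq:kplaneW2Holder} with $c=\tfrac12\min\{(2C_0)^{-1/q},(2C_0)^{-1}\}$.

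The only point needing care is this inversion of $t^q+t$ into the $\min\{\widetilde W_2^{1/q},\widetilde W_2\}$ form, which the case split handles. Note that the mass and barycenter terms are unbounded, so $t$ cannot be assumed small, and this is why the linear regime $t>1$ must be kept. All constants depend only on $d,\rho,M$, as required.
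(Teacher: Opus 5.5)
Your proposal is correct and follows the paper's proof essentially step for step. The upper bound comes from $D(P\mu,P\nu)\le \widetilde d_2(\mu,\nu)\le c_0^{-1}\widetilde W_2(\mu,\nu)$, and the lower bound uses the same case split on whether $\widetilde d_2(\mu,\nu)$ is at most or at least $1$, combined with $D(P\mu,P\nu)\ge \tfrac12\widetilde d_2(\mu,\nu)$.
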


\begin{proof}
By Theorem \ref{t:kplaneEq_d2_general}, we have
\begin{align*}
\frac{1}{2}\,\widetilde d_2(\mu,\nu)
\le
D(P\mu,P\nu)
\le
\widetilde d_2(\mu,\nu).
\end{align*}
Moreover, by Proposition \ref{p:td2tW2Holder}, there exist constants $c_0,C_0>0$ and an exponent $0<q \le 1/2$, depending only on $d,\rho,M$, such that
\begin{align*}
c_0\, \widetilde d_2(\mu,\nu)
\le
\widetilde W_2(\mu,\nu)
\le
C_0\bigl(\widetilde d_2(\mu,\nu)^q+\widetilde d_2(\mu,\nu)\bigr).
\end{align*}
The upper bound in \eqref{eq:kplaneW2Holder} follows immediately:
\begin{align*}
D(P\mu,P\nu)
\le
\widetilde d_2(\mu,\nu)
\le
c_0^{-1}\widetilde W_2(\mu,\nu).
\end{align*}

It remains to prove the lower bound. We distinguish two cases. If
$0\le \widetilde d_2(\mu,\nu)\le 1$, then, since $0<q \le 1/2$,
\begin{align*}
\widetilde d_2(\mu,\nu)
\le
\widetilde d_2(\mu,\nu)^q.
\end{align*}
Hence
\begin{align*}
\widetilde W_2(\mu,\nu)
\le
2C_0\,\widetilde d_2(\mu,\nu)^q,
\end{align*}
and therefore
\begin{align*}
\widetilde d_2(\mu,\nu)
\ge
(2C_0)^{-1/q}\,\widetilde W_2(\mu,\nu)^{1/q}.
\end{align*}
On the other hand, if $\widetilde d_2(\mu,\nu)\ge 1$, then
\begin{align*}
\widetilde d_2(\mu,\nu)^q
\le
\widetilde d_2(\mu,\nu),
\end{align*}
and hence
\begin{align*}
\widetilde W_2(\mu,\nu)
\le
2C_0\,\widetilde d_2(\mu,\nu).
\end{align*}
Thus
\begin{align*}
\widetilde d_2(\mu,\nu)
\ge
(2C_0)^{-1}\,\widetilde W_2(\mu,\nu).
\end{align*}
Combining the two cases, we obtain
\begin{align*}
\widetilde d_2(\mu,\nu)
\ge
c_1\,\min\bigl\{
\widetilde W_2(\mu,\nu)^{1/q},
\widetilde W_2(\mu,\nu)
\bigr\}
\end{align*}
for some constant $c_1>0$ depending only on $d,\rho,M$. Therefore,
\begin{align*}
D(P\mu,P\nu)
\ge
\frac12\,\widetilde d_2(\mu,\nu)
\ge
\frac{c_1}{2}\,
\min\bigl\{
\widetilde W_2(\mu,\nu)^{1/q},
\widetilde W_2(\mu,\nu)
\bigr\}.
\end{align*}
Absorbing constants into $c$ and $C$ gives \eqref{eq:kplaneW2Holder}.
\end{proof}

\section{H\"older-type comparison between $W_2$ and max-sliced $W_2$ distances}

In this section, we use the H\"older-type stability estimates obtained in the previous section to compare the $W_2$ distance with its max-sliced analogue.
\begin{defn}\label{def:MSW2}
For $\mu,\nu \in \mathcal P_2(\mathbb R^d)$, the max-sliced (or projection robust) $W_2$ distance is defined by
\begin{align}
MSW_2(\mu,\nu)
:=
\sup_{\alpha \in G_{k,d}} W_2(P_\alpha\mu,P_\alpha\nu).
\end{align}
For $\mu,\nu \in \mathcal M_2^+(\mathbb R^d)$, the generalized max-sliced $W_2$ distance is defined by
\begin{align}
MS\tW_2(\mu,\nu)
:=
\sup_{\alpha \in G_{k,d}} \tW_2(P_\alpha\mu,P_\alpha\nu).
\end{align}
\end{defn}

Projection-based Wasserstein distances are widely used in optimal transport
and machine learning as computationally cheaper alternatives to full
Wasserstein distances. In the $W_1$ case, strong and sharp comparison
results are known in several forms; see, for example,
\cite{BayraktarGuo2021,Carlier2025}. In the $W_2$ setting, Paty and Cuturi
\cite{PatyCuturi2019} proved a strong equivalence between $W_2$ and the
subspace-robust distance $S_k$, a min--max relaxation of a projection-based
transport problem. Here, we prove a two-sided H\"older-type comparison between $W_2$ and the max-sliced $W_2$ distance.

\begin{thm}\label{maxSW-W}
Let $\rho>0$, let $0\le k<d$, and let
$\mu,\nu \in \mathcal M_{2+\rho}^+(\mathbb R^d)$. Assume that there exists
$M>0$ such that
\begin{align*}
\frac1{M_\mu}\int_{\mathbb R^d}|x-\vec{m}_\mu|^{2+\rho}\,d\mu(x)\le M,
\qquad
\frac1{M_\nu}\int_{\mathbb R^d}|x-\vec{m}_\nu|^{2+\rho}\,d\nu(x)\le M.
\end{align*}
Then there exist a constant $c>0$ and an exponent $0<q \le 1/2$, depending only on
$k,d,\rho,M$, such that
\begin{align}\label{W2equivMSW2}
c\,\min\bigl\{\tW_2(\mu,\nu)^{1/q},\tW_2(\mu,\nu)\bigr\}
\le
MS\tW_2(\mu,\nu)
\le
\tW_2(\mu,\nu).
\end{align}
In particular, if $\mu,\nu \in \mathcal P_{2+\rho}(\mathbb R^d)$ and
$\vec{m}_\mu=\vec{m}_\nu=\vec{0}$, then, possibly after changing the
constant $c>0$ and the exponent $0<q \le 1/2$ while preserving their dependence
only on $k,d,\rho,M$,
\begin{align}\label{eq:MSW2-W2-prob}
c\,W_2(\mu,\nu)^{1/q}
\le
MSW_2(\mu,\nu)
\le
W_2(\mu,\nu).
\end{align}
\end{thm}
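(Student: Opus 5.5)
The proof splits into an upper bound, which is elementary, and a lower bound that runs through the Fourier distance.

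\textbf{Upper bound.} I would first show $W_2(P_\alpha\mu,P_\alpha\nu)\le W_2(\mu,\nu)$ for probability measures. Take an optimal coupling $\pi$ of $\mu,\nu$ and push it forward by $\pi_{\alpha^\perp}\times\pi_{\alpha^\perp}$. This gives a coupling of the projections, and $|\pi_{\alpha^\perp}(x-y)|\le|x-y|$ yields the inequality.

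For general $\mu,\nu$, Proposition \ref{p:kplaneProperties} gives the decomposition
\[
\tW_2(P_\alpha\mu,P_\alpha\nu)=W_2(P_\alpha\bar\mu_0,P_\alpha\bar\nu_0)+|\pi_{\alpha^\perp}(\vec m_\mu-\vec m_\nu)|+|M_\mu-M_\nu|.
\]
Each term is bounded by the corresponding term of $\tW_2(\mu,\nu)$. Taking the supremum over $\alpha$ gives $MS\tW_2\le\tW_2$.

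\textbf{Lower bound.} The idea is to pass through $d_2$.
\begin{itemize}
\item[(a)] For every $\alpha$, the pair $P_\alpha\bar\mu_0,P_\alpha\bar\nu_0$ consists of centered probability measures on $\alpha^\perp\cong\mathbb R^{d-k}$. By Proposition \ref{p:kplaneProperties}(ii), their $(2+\rho)$-moments are still bounded by $M$.
\item[(b)] Applying the left inequality of Proposition \ref{thm:d2W2Holder} in dimension $d-k$ gives $W_2(P_\alpha\bar\mu_0,P_\alpha\bar\nu_0)\ge c_0\,d_2(P_\alpha\bar\mu_0,P_\alpha\bar\nu_0)$. Here $c_0$ depends only on $\rho,M$ (the explicit constant $1/(2\sqrt{M_2})$), hence uniformly in $\alpha$.
\item[(c)] It follows that $\tW_2(P_\alpha\mu,P_\alpha\nu)\ge \min\{c_0,1\}\,\td_2(P_\alpha\mu,P_\alpha\nu)$. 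Taking the supremum gives $MS\tW_2(\mu,\nu)\ge \min\{c_0,1\}\,D(P\mu,P\nu)$.
\item[(d)] Theorem \ref{thm:kplaneW2Holder} gives $D(P\mu,P\nu)\ge c\min\{\tW_2^{1/q},\tW_2\}$, which finishes \eqref{W2equivMSW2}.
\end{itemize}

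\textbf{Probability case.} For centered probability measures, $\tW_2=W_2$ and $MS\tW_2=MSW_2$, since the mass and barycenter terms vanish; the latter holds because each projection is again centered. It remains to remove the $\min$. Here $W_2(\mu,\nu)^2\le 4M_2\le C(\rho,M)$, so $W_2$ is bounded by some $R=R(\rho,M)$. If $W_2\le 1$, the minimum equals $W_2^{1/q}$. If $1<W_2\le R$, then $W_2\ge W_2^{1/q}R^{1-1/q}$. So in both cases $\min\{W_2^{1/q},W_2\}\ge \min\{1,R^{1-1/q}\}\,W_2^{1/q}$, and the constant can be adjusted accordingly.

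\textbf{Main obstacle.} The delicate points are the edge case $k=0$ allowed in the statement and the uniformity of constants in dimension $d-k$.
\begin{itemize}
\item For $k=0$, $G_{0,d}=\{0\}$ and $P_\alpha$ is the identity. The claim is then trivial: take $q=1/2$, and the same $\min$-removal argument (without boundedness issues for the generalized metric, where $\min\{x^{2},x\}\le x$) gives it.
\item For $k\ge1$, the Fourier-slice step inside Proposition \ref{p:PdistEqWass} needs $1\le k\le d-1$, which holds.
\item Uniformity in $\alpha$ requires only that the constant in the left inequality of Proposition \ref{thm:d2W2Holder} is dimension-free and depends only on the second-moment bound. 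The proof of that proposition makes this explicit, so I would quote that explicit constant.
\end{itemize}
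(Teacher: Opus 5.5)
Your proposal is correct and follows the paper's route. The upper bound pushes couplings forward by $\pi_{\alpha^\perp}\times\pi_{\alpha^\perp}$ and handles mass and barycenter via Proposition \ref{p:kplaneProperties}. The lower bound applies the left inequality of Proposition \ref{thm:d2W2Holder} uniformly in $\alpha$ (the paper packages your steps (b)--(c) as Proposition \ref{p:td2tW2Holder}) to get $c_0\,D(P\mu,P\nu)\le MS\tW_2(\mu,\nu)$, and then invokes Theorem \ref{thm:kplaneW2Holder}. The differences are minor: the paper proves the centered probability case directly via Corollary \ref{corr:D_Hequiv_W2} rather than removing the minimum using $W_2\le R(\rho,M)$, and it does not separately treat $k=0$, which your argument handles cleanly.
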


\begin{proof}
Fix $\alpha\in G_{k,d}$. If $\lambda\in\mathcal M_{2+\rho}^+(\mathbb R^d)$,
then by Proposition \ref{p:kplaneProperties}, $P_\alpha\lambda=(\pi_{\alpha^\perp})_\#\lambda$ satisfies
\begin{align}
M_{P_\alpha\lambda}=M_\lambda,
\qquad
\vec{m}_{P_\alpha\lambda}
=
\pi_{\alpha^\perp}\vec{m}_\lambda.
\label{eq:Palpha-mass-mean}
\end{align}
Moreover, since $\pi_{\alpha^\perp}$ is nonexpansive,
\begin{align}
\frac{1}{M_{P_\alpha\lambda}}
\int_{\alpha^\perp}
|y-\vec{m}_{P_\alpha\lambda}|^{2+\rho}\,dP_\alpha\lambda(y)
&=
\frac{1}{M_\lambda}
\int_{\mathbb R^d}
\left|\pi_{\alpha^\perp}(x-\vec{m}_\lambda)\right|^{2+\rho}
\,d\lambda(x)
\notag \\
&\le
\frac{1}{M_\lambda}
\int_{\mathbb R^d}
|x-\vec{m}_\lambda|^{2+\rho}\,d\lambda(x).
\label{eq:Palpha-moment-bound}
\end{align}

We first prove \eqref{eq:MSW2-W2-prob}. Assume that
$\mu,\nu\in\mathcal P_{2+\rho}(\mathbb R^d)$,
$\vec{m}_\mu=\vec{m}_\nu=\vec{0}$, and
\[
\int_{\mathbb R^d}|x|^{2+\rho}\,d\mu(x)\le M,
\qquad
\int_{\mathbb R^d}|x|^{2+\rho}\,d\nu(x)\le M.
\]

By \eqref{eq:Palpha-mass-mean} and \eqref{eq:Palpha-moment-bound},
for every $\alpha\in G_{k,d}$, the projected measures
$P_\alpha\mu$ and $P_\alpha\nu$ are centered probability measures on
$\alpha^\perp$ satisfying the same $(2+\rho)$-moment bound. Therefore
Proposition \ref{thm:d2W2Holder} applies uniformly in $\alpha$ to the pair $(P_\alpha\mu,P_\alpha\nu)$ and gives
a constant $c_0>0$, depending only on $d-k,\rho,M$, such that
\[
c_0\,d_2(P_\alpha\mu,P_\alpha\nu)
\le
W_2(P_\alpha\mu,P_\alpha\nu),
\]
for every $\alpha\in G_{k,d}$. Taking the supremum over $\alpha$ gives
\[
c_0D(P\mu,P\nu) = c_0\sup_{\alpha\in G_{k,d}} d_2(P_\alpha\mu,P_\alpha\nu) \le \sup_{\alpha\in G_{k,d}} W_2(P_\alpha\mu,P_\alpha\nu) =MSW_2(\mu,\nu).
\]
By Corollary \ref{corr:D_Hequiv_W2}, there exist $c_1>0$ and
$0<q_1 \le 1/2$, depending only on $k,d,\rho,M$, such that
\[
c_1W_2(\mu,\nu)^{1/q_1}
\le
D(P\mu,P\nu).
\]
Combining the last two estimates yields
\[
c\,W_2(\mu,\nu)^{1/q_1}
\le
MSW_2(\mu,\nu).
\]

Next, we prove the upper bound. If $\eta,\zeta\in\mathcal P_2(\mathbb R^d)$, then for any $\gamma\in\Pi(\eta,\zeta)$,
\[
(\pi_{\alpha^\perp},\pi_{\alpha^\perp})_\#\gamma
\in
\Pi(P_\alpha\eta,P_\alpha\zeta).
\]
Thus
\begin{align*}
W_2(P_\alpha\eta,P_\alpha\zeta)^2
\le
\int_{\mathbb R^d\times\mathbb R^d}
\left|\pi_{\alpha^\perp}(x-y)\right|^2\,d\gamma(x,y) 
\le
\int_{\mathbb R^d\times\mathbb R^d}
|x-y|^2\,d\gamma(x,y).
\end{align*}
Taking the infimum over $\gamma\in\Pi(\eta,\zeta)$ gives
\begin{align}
W_2(P_\alpha\eta,P_\alpha\zeta)
\le
W_2(\eta,\zeta),
\label{eq:Palpha-W2-contraction}
\end{align}
and taking the supremum over $\alpha$ yields
\[
MSW_2(\mu,\nu)
=
\sup_{\alpha\in G_{k,d}}W_2(P_\alpha\mu,P_\alpha\nu)
\le
W_2(\mu,\nu).
\]
Renaming $q_1$ as $q$ proves \eqref{eq:MSW2-W2-prob}.

We now prove the lower bound in \eqref{W2equivMSW2}. Let
$\mu,\nu\in\mathcal M_{2+\rho}^+(\mathbb R^d)$ satisfy the stated
normalized moment bounds. By \eqref{eq:Palpha-moment-bound}, the same
normalized moment bounds hold uniformly for $P_\alpha\mu$ and
$P_\alpha\nu$. Hence Proposition \ref{p:td2tW2Holder} applies uniformly in
$\alpha$ and gives a constant $c_2>0$, depending only on $d-k,\rho,M$,
such that
\[
c_2\,\td_2(P_\alpha\mu,P_\alpha\nu)
\le
\tW_2(P_\alpha\mu,P_\alpha\nu),
\]
for every $\alpha\in G_{k,d}$. Taking the supremum over $\alpha$ gives
\[
c_2D(P\mu,P\nu)\le MS\tW_2(\mu,\nu).
\]
By Theorem \ref{thm:kplaneW2Holder}, there exist $c_3>0$ and
$0<q_2 \le 1/2$, depending only on $k,d,\rho,M$, such that
\[
c_3\min\left\{
\tW_2(\mu,\nu)^{1/q_2},\tW_2(\mu,\nu)
\right\}
\le
D(P\mu,P\nu).
\]
Therefore,
\[
c\,\min\left\{
\tW_2(\mu,\nu)^{1/q_2},\tW_2(\mu,\nu)
\right\}
\le
MS\tW_2(\mu,\nu).
\]

It remains to prove the upper bound in \eqref{W2equivMSW2}. By \eqref{eq:Palpha-mass-mean},
the mass term in $\tW_2$ is unchanged under $P_\alpha$. The barycenter term
is nonincreasing because
\begin{align}
\left|\vec{m}_{P_\alpha\mu}-\vec{m}_{P_\alpha\nu}\right|
&=
\left|\pi_{\alpha^\perp}(\vec{m}_\mu-\vec{m}_\nu)\right|
\le
\left|\vec{m}_\mu-\vec{m}_\nu\right|.
\label{eq:Palpha-barycenter-contraction}
\end{align}
Finally, the centered normalized Wasserstein term is nonincreasing by
\eqref{eq:Palpha-W2-contraction}. Hence
\[
\tW_2(P_\alpha\mu,P_\alpha\nu)
\le
\tW_2(\mu,\nu).
\]
Taking the supremum over $\alpha\in G_{k,d}$ gives
\[
MS\tW_2(\mu,\nu)
=
\sup_{\alpha\in G_{k,d}}\tW_2(P_\alpha\mu,P_\alpha\nu)
\le
\tW_2(\mu,\nu).
\]
Renaming $q_2$ as $q$ proves \eqref{W2equivMSW2}.
\end{proof}

\section{Bi-Lipschitz $W_2$ stability for compactly supported densities via Sobolev estimates}

In the previous sections, we established stability estimates for the $k$-plane transform in terms of the Fourier and the Wasserstein distances on measures. We now specialize to absolutely continuous probability measures with bounded and compactly supported densities. In this setting, the Wasserstein distance is quantitatively comparable to the homogeneous $H^{-1}$ norm, while the $k$-plane transform admits a Sobolev stability estimate in the $H^{-1}$ scale. Combining these facts yields a direct stability estimate for the $k$-plane transform in terms of the $2$-Wasserstein distance.

We first recall the homogeneous Sobolev seminorm
\begin{align}\label{Sobolevnorm}
    \|f\|_{\dot H^s(\mathbb R^d)}
    :=
    \left(
    \int_{\mathbb R^d} |\widehat f(\xi)|^2 |\xi|^{2s}\, d\xi
    \right)^{1/2}, \quad s \in \R.
\end{align}
In particular, for $f \in \dot H^{-1}(\mathbb R^d)$, the seminorm $\|f\|_{\dot H^{-1}(\mathbb R^d)}$ is the dual norm of $\dot H^1(\mathbb R^d)$, namely
$$
\|f\|_{\dot H^{-1}(\mathbb R^d)}
=
\sup \left\{
|\langle f,\phi\rangle| : \phi \in \dot H^1(\mathbb R^d),\ \|\phi\|_{\dot H^1(\mathbb R^d)} \le 1
\right\}.
$$

The following equivalence is likely standard, but since we were unable to locate a convenient reference in the precise form needed below, we include a proof.

\begin{prop}[Equivalence of the $H^{-1}$ norm and the $\dot H^{-1}$ seminorm]\label{Sobolev-1Equivalence}
Let $d\ge 2$, and suppose that $f \in H^{-1}(\mathbb R^d)$ satisfy
$$
\operatorname{supp}(f)\subseteq \overline{\Omega},
$$
for some bounded, open set $\Omega \subseteq \mathbb R^d$. If $d=2$, assume in addition that $f$ has zero mean, i.e.,
$$
\widehat f(0)=\int_{\mathbb R^2} f(x)\,dx=0.
$$
Then
\begin{align}
  \|f\|_{H^{-1}(\mathbb R^d)} \simeq \|f\|_{\dot H^{-1}(\mathbb R^d)}.
\end{align}
\end{prop}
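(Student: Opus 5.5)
One direction is immediate from the Fourier side: since $(1+|\xi|^2)^{-1}\le |\xi|^{-2}$ for every $\xi\neq 0$, we get
$$
\|f\|_{H^{-1}(\mathbb R^d)}^2=\int_{\mathbb R^d}\frac{|\widehat f(\xi)|^2}{1+|\xi|^2}\,d\xi\le \int_{\mathbb R^d}\frac{|\widehat f(\xi)|^2}{|\xi|^2}\,d\xi=\|f\|_{\dot H^{-1}(\mathbb R^d)}^2 .
$$
For this to be meaningful we need the right-hand side finite, and that finiteness is the content of the reverse inequality. So the plan is to prove
$$
\|f\|_{\dot H^{-1}}\le C_{d,\Omega}\|f\|_{H^{-1}},
$$
with a constant depending only on $d$ and $\Omega$.

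Split the frequency integral at $|\xi|=1$. On $|\xi|\ge 1$ we have $|\xi|^{-2}\le 2(1+|\xi|^2)^{-1}$, so that part is bounded by $2\|f\|_{H^{-1}}^2$. The whole difficulty is the low-frequency part $\int_{|\xi|<1}|\widehat f(\xi)|^2|\xi|^{-2}\,d\xi$.

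To handle it, use the compact support. Fix a cutoff $\chi\in C_c^\infty(\mathbb R^d)$ with $\chi\equiv 1$ on a neighborhood of $\overline\Omega$. Then $f=\chi f$, so for every $\xi$,
$$
\widehat f(\xi)=\langle f,\chi e^{-ix\cdot\xi}\rangle .
$$
By $H^{-1}$–$H^{1}$ duality this gives
$$
|\widehat f(\xi)|\le \|f\|_{H^{-1}}\,\|\chi e^{-ix\cdot\xi}\|_{H^1}\le C_\chi(1+|\xi|)\|f\|_{H^{-1}} .
$$
Hence $\widehat f$ is bounded by $C\|f\|_{H^{-1}}$ on the unit ball, with $C$ depending only on $\Omega$ (through $\chi$). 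Since $|\xi|^{-2}$ is integrable near $0$ exactly when $d\ge 3$, this settles the case $d\ge 3$:
$$
\int_{|\xi|<1}\frac{|\widehat f|^2}{|\xi|^2}\,d\xi\le C^2\|f\|_{H^{-1}}^2\int_{|\xi|<1}|\xi|^{-2}\,d\xi .
$$

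For $d=2$ the weight $|\xi|^{-2}$ is only borderline non-integrable, which is the main obstacle. Here we use the zero-mean hypothesis together with a gradient bound. Since $\widehat f(0)=0$, the mean value theorem gives $|\widehat f(\xi)|\le |\xi|\sup_{|\eta|<1}|\nabla\widehat f(\eta)|$. The gradient is controlled the same way as $\widehat f$ itself:
$$
\partial_j\widehat f(\xi)=\langle f,-ix_j\chi(x)e^{-ix\cdot\xi}\rangle,
\qquad
|\partial_j\widehat f(\xi)|\le C'_\chi(1+|\xi|)\|f\|_{H^{-1}} .
$$
This bound uses only that $x_j\chi$ is smooth and compactly supported. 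Therefore $|\widehat f(\xi)|^2/|\xi|^2\le C''\|f\|_{H^{-1}}^2$ on the unit ball, which is integrable. (This argument also works in any dimension when $\widehat f(0)=0$.)

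The only care needed is to justify the pairing $\widehat f(\xi)=\langle f,\chi e^{-ix\cdot\xi}\rangle$ for $f\in H^{-1}$ with compact support. This is standard via the Paley–Wiener–Schwartz theorem for compactly supported distributions, under which $\widehat f$ is smooth and differentiation under the pairing is valid.
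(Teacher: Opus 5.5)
Your proof is correct and follows the paper's argument: the same split of frequencies at $|\xi|=1$, and for $d\ge 3$ the same duality bound $|\widehat f(\xi)|\le \|f\|_{H^{-1}}\|\chi e^{-ix\cdot\xi}\|_{H^1}$ on the unit ball. For $d=2$ you apply the mean value theorem to $\widehat f$ using the test functions $-ix_j\chi e^{-ix\cdot\xi}$, whereas the paper pairs $f$ with the difference quotient $(e^{-ix\cdot\xi}-1)\chi/|\xi|^{1/2}$ and applies the mean value theorem to $\widehat\chi$; this is the same estimate in derivative rather than difference form, and your choice of $\chi\equiv 1$ on a neighborhood of $\overline\Omega$ is slightly more careful than the paper's.
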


\begin{proof}
One direction is immediate:
\begin{align*}
\|f\|^2_{H^{-1}(\mathbb R^d)}
&=
\int_{\mathbb R^d} (1+|\xi|^2)^{-1} |\widehat f(\xi)|^2\,d\xi \\
&\le
\int_{\mathbb R^d} |\xi|^{-2} |\widehat f(\xi)|^2\,d\xi
=
\|f\|^2_{\dot H^{-1}(\mathbb R^d)}.
\end{align*}
For the converse estimate, we split the homogeneous norm into high and low frequencies:
$$
\|f\|^2_{\dot H^{-1}(\mathbb R^d)}
=
\int_{|\xi|\ge 1} |\xi|^{-2} |\widehat f(\xi)|^2\,d\xi
+
\int_{|\xi|\le 1} |\xi|^{-2} |\widehat f(\xi)|^2\,d\xi.
$$
For $|\xi|\ge 1$, we have
$$
|\xi|^2 \ge \frac12(1+|\xi|^2),
$$
and therefore
$$
\int_{|\xi|\ge 1} |\xi|^{-2} |\widehat f(\xi)|^2\,d\xi
\le
2\int_{|\xi|\ge 1} (1+|\xi|^2)^{-1} |\widehat f(\xi)|^2\,d\xi
\le
2\|f\|^2_{H^{-1}(\mathbb R^d)}.
$$

It remains to estimate the low-frequency part.
If $d>2$, then
$$
\int_{|\xi|\le 1} |\xi|^{-2} |\widehat f(\xi)|^2\,d\xi
\le
\left(\int_{|\xi|\le 1} |\xi|^{-2}\,d\xi\right)
\sup_{|\xi|\le 1} |\widehat f(\xi)|^2
=
\frac{|\mathbb S^{d-1}|}{d-2}\sup_{|\xi|\le 1} |\widehat f(\xi)|^2.
$$
Thus it suffices to bound $\sup_{|\xi|\le 1} |\widehat f(\xi)|$ in terms of $\|f\|_{H^{-1}}$.

Choose $\chi \in C_c^\infty(\mathbb R^d)$ such that $\chi=1$ on $\Omega$, and define
$$
\chi_\xi(x):=e^{-ix\cdot \xi}\chi(x).
$$
Since $\operatorname{supp}(f)\subseteq \Omega$, we have
$$
\widehat f(\xi)
=
\int_{\mathbb R^d} f(x)e^{-ix\cdot \xi}\,dx
=
\int_{\mathbb R^d} f(x)\chi(x)e^{-ix\cdot \xi}\,dx
=
\langle f,\chi_\xi\rangle.
$$
By Plancherel's theorem and the Cauchy--Schwarz inequality,
\begin{align}\label{PlancherelCS-new}
|\widehat f(\xi)|^2
&=
\left|
\int_{\mathbb R^d}
\widehat f(\eta)\,\widehat{\chi_\xi}(\eta)\,d\eta
\right|^2 \\
&\le
\left(
\int_{\mathbb R^d} (1+|\eta|^2)^{-1} |\widehat f(\eta)|^2\,d\eta
\right)
\left(
\int_{\mathbb R^d} (1+|\eta|^2) |\widehat{\chi_\xi}(\eta)|^2\,d\eta
\right)\nonumber \\
&=
\|f\|_{H^{-1}(\mathbb R^d)}^2 \|\chi_\xi\|_{H^1(\mathbb R^d)}^2.\nonumber
\end{align}
Moreover,
$$
\|\chi_\xi\|_{H^1(\mathbb R^d)}^2
=
\int_{\mathbb R^d} (1+|\eta|^2) |\widehat\chi(\eta+\xi)|^2\,d\eta,
$$
which is continuous in $\xi$, hence bounded for $|\xi|\le 1$. Therefore,
$$
\sup_{|\xi|\le 1} |\widehat f(\xi)|^2
\le
C\,\|f\|^2_{H^{-1}(\mathbb R^d)},
$$
which proves the claim for $d>2$.

If $d=2$, we write
$$
\int_{|\xi|\le 1} |\xi|^{-2} |\widehat f(\xi)|^2\,d\xi
\le
2\pi \sup_{|\xi|\le 1} \frac{|\widehat f(\xi)|^2}{|\xi|}.
$$
Let
$$
\chi_\xi(x):=\frac{e^{-ix\cdot \xi}-1}{|\xi|^{1/2}}\chi(x),
$$
where $\chi$ is the cutoff chosen above. Since $\widehat f(0)=\int_{\R^2} f(x)\,dx=0$, we obtain
\begin{align*}
\frac{\widehat f(\xi)}{|\xi|^{1/2}}
&=
\frac{1}{|\xi|^{1/2}}
\left(
\int_{\mathbb R^2} f(x)e^{-ix\cdot \xi}\,dx
-
\int_{\mathbb R^2} f(x)\,dx
\right) \\
&=
\int_{\mathbb R^2} f(x)\,\frac{e^{-ix\cdot \xi}-1}{|\xi|^{1/2}}\,\chi(x)\,dx
=
\langle f,\chi_\xi\rangle.
\end{align*}
Applying the same argument as in \eqref{PlancherelCS-new}, we get
$$
|\xi|^{-1} |\widehat f(\xi)|^2
\le
\|\chi_\xi\|_{H^1(\mathbb R^2)}^2 \|f\|_{H^{-1}(\mathbb R^2)}^2.
$$
It remains to show that $\|\chi_\xi\|_{H^1}$ is bounded uniformly for $|\xi|\le 1$. Since
$$
\widehat{\chi_\xi}(\eta)
=
\frac{\widehat\chi(\eta+\xi)-\widehat\chi(\eta)}{|\xi|^{1/2}},
$$
the mean value theorem gives, for $|\xi|\le 1$,
$$
|\widehat{\chi_\xi}(\eta)|^2
\le
|\xi|\,
\sup_{|\zeta|\le 1} |\nabla \widehat\chi(\eta+\zeta)|^2.
$$
Therefore,
\begin{align*}
\|\chi_\xi\|_{H^1(\mathbb R^2)}^2
&=
\int_{\mathbb R^2} (1+|\eta|^2)|\widehat{\chi_\xi}(\eta)|^2\,d\eta \\
&\le
|\xi|\int_{\mathbb R^2} (1+|\eta|^2)\sup_{|\zeta|\le 1} |\nabla \widehat\chi(\eta+\zeta)|^2\,d\eta.
\end{align*}
Since $\chi \in C_c^\infty(\mathbb R^2)$, the Fourier transform $\widehat\chi$ and all its derivatives decay faster than any polynomial, so the integral on the right-hand side is finite. Hence $\|\chi_\xi\|_{H^1(\mathbb R^2)}$ is uniformly bounded for $|\xi|\le 1$, and the proof is complete.
\end{proof}

We now combine this Sobolev equivalence with the $W_2$–$\dot H^{-1}$ comparison of Peyré and the Sobolev stability estimate for the $k$-plane transform. In what follows, for an absolutely continuous measure $\mu=f\,dx$, we write $Pf:=P(f\,dx)$.

\begin{thm}\label{thm:kplaneW2densities}
Let $d\ge 2$, and let $\Omega\subset \mathbb R^d$ be a bounded connected Lipschitz domain. Let
$$
\mu=f(x)\,dx,
\qquad
\nu=g(x)\,dx
$$
be probability measures supported in $\overline{\Omega}$. Assume that $f$ and $g$ vanish outside $\Omega$ and that there exist constants $0<b\le B$ such that
$$
b\le f(x),\, g(x)\le B
\qquad\text{for a.e. }x\in \Omega.
$$
Then there exist constants $c,C>0$, depending only on $b,B,d,k$ and $\Omega$, such that
\begin{align}\label{eq:kplaneW2densities}
cW_2(\mu,\nu) \le \|Pf-Pg\|_{H^{k/2-1}(\mathcal G_{k,d})}
\le C W_2(\mu,\nu).
\end{align}
\end{thm}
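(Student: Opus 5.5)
The proof is a chain of three two-sided comparisons. Set $h:=f-g$. Then $h\in L^\infty$, $h$ is supported in $\overline\Omega$, and $\widehat h(0)=\int f-\int g=0$ because $\mu$ and $\nu$ are both probability measures. Since $h\in L^2$ with compact support, it lies in $H^{-1}(\mathbb R^d)$.

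\emph{Step 1 (Sobolev stability of the $k$-plane transform).} By linearity of $P$, we have $Pf-Pg=Ph$. Apply Theorem \ref{kplaneSobolevEstimate} with $s=-1$ to get
\[
c_{-1,d,k}\|h\|_{H^{-1}(\mathbb R^d)}\le \|Ph\|_{H^{k/2-1}(\mathcal G_{k,d})}\le C_{-1,d,k}\|h\|_{H^{-1}(\mathbb R^d)}.
\]
The constants depend only on $d$, $k$ and $\Omega$. I will also check that the measure-theoretic $P$ of Section 2, applied to $h\,dx$, agrees with the function definition \eqref{Pdef}. This follows from Fubini, since $\pi_{\alpha^\perp}\#(h\,dx)$ has density $y\mapsto\int_\alpha h(y+x)\,dx$.

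\emph{Step 2 (inhomogeneous versus homogeneous norm).} Apply Proposition \ref{Sobolev-1Equivalence} to $h$. For $d=2$ its zero-mean hypothesis holds by the observation above. This gives $\|h\|_{H^{-1}}\simeq\|h\|_{\dot H^{-1}}$, with constants depending on $d$ and $\Omega$ through the cutoff $\chi$.

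\emph{Step 3 (the $W_2$--$\dot H^{-1}$ comparison).} This is the main input not yet stated in the paper. I will quote Peyré's estimate: if $\mu,\nu$ have densities bounded above by $B$, then $\|f-g\|_{\dot H^{-1}}\le \sqrt{B}\,W_2(\mu,\nu)$. Conversely, if the densities are bounded below by $b$ on a convex domain, then $W_2(\mu,\nu)\le \frac{2}{\sqrt b}\|f-g\|_{\dot H^{-1}}$.

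The lower bound on $W_2$ is the delicate direction, because the domain here is only Lipschitz and connected, not convex. My plan is to replace the whole-space $\dot H^{-1}$ norm by the dual of $H^1(\Omega)$ modulo constants, that is, the weighted $\dot H^{-1}(\Omega)$ norm. I will then use the Benamou--Brenier formulation along a curve of measures connecting $\mu$ to $\nu$, chosen so that the curve stays in $\Omega$ with density at least a constant multiple of $b$. The natural choice is the linear interpolation $(1-t)f+tg$. This gives
\[
W_2(\mu,\nu)\le C(b)\,\|f-g\|_{\dot H^{-1}(\Omega)}.
\]
Next, the Poincaré inequality on the bounded connected Lipschitz domain $\Omega$, combined with a bounded Sobolev extension operator $H^1(\Omega)\to H^1(\mathbb R^d)$, shows that $\|h\|_{\dot H^{-1}(\Omega)}\lesssim\|h\|_{H^{-1}(\mathbb R^d)}$ for $h$ of zero mean. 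For the other direction, any test function $\phi\in\dot H^1(\mathbb R^d)$ restricts to $\Omega$, and $\langle h,\phi\rangle$ is unchanged when a constant is subtracted from $\phi$. Hence $\|h\|_{\dot H^{-1}(\mathbb R^d)}\le \|h\|_{\dot H^{-1}(\Omega)}$, and Peyré's upper bound gives $\|h\|_{\dot H^{-1}(\Omega)}\lesssim \sqrt{B}\,W_2$. These two facts produce the equivalence $W_2\simeq\|h\|_{H^{-1}(\mathbb R^d)}$ with constants depending on $b,B,d,\Omega$. I expect Step 3 to be the main obstacle: specifically, controlling the constants for non-convex $\Omega$ and reconciling the $\Omega$-relative dual norm with the whole-space $\dot H^{-1}$ norm. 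This step may go through Step 2 or bypass it.

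Chaining Steps 1–3 yields \eqref{eq:kplaneW2densities}, with $c$ and $C$ depending only on $b,B,d,k,\Omega$.
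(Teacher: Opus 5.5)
Your proof is correct. Steps 1--2 and the $B$-direction of Step 3 (Peyr\'e's Theorem 2.5 on $\mathbb R^d$, giving $\|h\|_{\dot H^{-1}(\mathbb R^d)}\le\sqrt B\,W_2$) coincide with the paper. You differ in the $b$-direction.

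The paper argues as follows:
\begin{itemize}
\item it cites Peyr\'e's Corollary 2.3 on $\Omega$ with its intrinsic metric and uses $W_2\le W_{2,\Omega}$;
\item it applies extension plus Poincar\'e to get $\|h\|_{\dot H^{-1}(\Omega)}\lesssim\|h\|_{\dot H^{-1}(\mathbb R^d)}$;
\item it then needs the nontrivial half of Proposition \ref{Sobolev-1Equivalence} to return to $H^{-1}$.
\end{itemize}

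You instead run Benamou--Brenier on $\mathbb R^d$ along $(1-t)f+tg$, with flux $\nabla\phi$ from the Neumann problem on $\Omega$. This is essentially the mechanism behind Peyr\'e's corollary. Since the curve stays in $\overline\Omega$ with density at least $b$, it bounds the Euclidean $W_2$ by $b^{-1/2}\|h\|_{\dot H^{-1}(\Omega)}$ without passing through $W_{2,\Omega}$.

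You then note that the extension operator controls the full $H^1(\mathbb R^d)$ norm. Hence $\|h\|_{\dot H^{-1}(\Omega)}\lesssim\|h\|_{H^{-1}(\mathbb R^d)}$ with the \emph{inhomogeneous} norm. This makes Step 2 unnecessary, since only the trivial inequality $\|h\|_{H^{-1}}\le\|h\|_{\dot H^{-1}}$ is then used. Combined with your observation $\|h\|_{\dot H^{-1}(\mathbb R^d)}\le\|h\|_{\dot H^{-1}(\Omega)}$, it even reproves Proposition \ref{Sobolev-1Equivalence} for such $h$, including $d=2$. The paper's route is shorter because it outsources the transport step to a citation. Yours is self-contained and avoids the low-frequency analysis.

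Two corrections are needed.

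First, you have the role of convexity reversed. The estimate using the lower bound $b$ holds on any connected domain; your interpolation argument proves it. It is the estimate using the upper bound $B$ that needs convexity or nonnegative curvature, because it relies on the density staying below $B$ along the Wasserstein geodesic.

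Second, and as a consequence, your sentence deriving $\|h\|_{\dot H^{-1}(\Omega)}\lesssim\sqrt B\,W_2$ from Peyr\'e's upper bound is not a valid direct application on a non-convex $\Omega$. Applied on $\Omega$, it would also produce the intrinsic $W_{2,\Omega}\ge W_2$, which is on the wrong side. The detour is not needed. For the upper bound, use directly, as the paper does,
$\|Ph\|_{H^{k/2-1}(\cG_{k,d})}\lesssim\|h\|_{H^{-1}(\mathbb R^d)}\le\|h\|_{\dot H^{-1}(\mathbb R^d)}\le\sqrt B\,W_2$.
If you want the statement on $\Omega$, it follows from this chain together with your extension bound.
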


\begin{proof}
Let $h:=f-g$, extended by zero outside $\Omega$. Since $\mu$ and $\nu$ are probability measures, $h$ has zero mean.

By Theorem 2.5 in \cite{Peyre2018}, applied on $\mathbb R^d$, and using the global upper bounds
$$
\mu,\nu\le B\,dx,
$$
we obtain
$$
\|h\|_{\dot H^{-1}(\mathbb R^d)}
\le
C_0 W_2(\mu,\nu),
$$
where $C_0>0$ depends only on $B$.

Conversely, applying Corollary 2.3 in \cite{Peyre2018} on $\Omega$ with its intrinsic distance and volume measure, and using the lower bound $f,g\ge b$, we obtain
$$
W_2(\mu,\nu)
\le
W_{2,\Omega}(\mu,\nu)
\le
C_1\|h\|_{\dot H^{-1}(\Omega)},
$$
where $W_{2,\Omega}$ denotes the Wasserstein distance associated with the intrinsic distance on $\Omega$.

Moreover, because $\Omega$ is a bounded Lipschitz domain and $h$ has zero mean, the standard extension theorem and Poincar\'e inequality imply
$$
\|h\|_{\dot H^{-1}(\Omega)}
\le
C_\Omega \|h\|_{\dot H^{-1}(\mathbb R^d)}.
$$
Therefore,
$$
W_2(\mu,\nu)
\le
C_2\|h\|_{\dot H^{-1}(\mathbb R^d)}.
$$
Combining the two estimates gives
$$
\|f-g\|_{\dot H^{-1}(\mathbb R^d)}
\simeq
W_2(\mu,\nu),
$$
with constants depending only on $b,B$ and $\Omega$.

Since $h=f-g$ is compactly supported, Theorem \ref{kplaneSobolevEstimate} applies with $s=-1$ and gives
$$
\|Pf-Pg\|_{H^{k/2-1}(\mathcal G_{k,d})}
\simeq
\|f-g\|_{H^{-1}(\mathbb R^d)}.
$$
Finally, Proposition \ref{Sobolev-1Equivalence} gives
$$
\|f-g\|_{H^{-1}(\mathbb R^d)}
\simeq
\|f-g\|_{\dot H^{-1}(\mathbb R^d)}.
$$
Combining these estimates proves \eqref{eq:kplaneW2densities}.
\end{proof}

\section{Conclusion}
In this paper, we developed a stability theory for the $k$-plane transform on finite positive Radon measures. We introduced a metric on $k$-plane transform data that is bi-Lipschitz comparable to a generalized Fourier metric augmenting the centered normalized Fourier distance with barycenter and total mass differences. This provides a measure-valued analogue of Sobolev stability estimates for the $k$-plane transform and shows that distances between $k$-plane data directly control, and are controlled by, Fourier-based distances between the underlying measures.

We then related these Fourier-based estimates to Wasserstein distances. Building on the comparison between Fourier and $2$-Wasserstein distances for probability measures \cite{Carrillo2007}, we proved corresponding estimates for finite positive Radon measures in terms of generalized Fourier and Wasserstein distances. This yielded H\"older-type stability estimates for the $k$-plane transform in generalized Wasserstein-type metrics. Finally, we established a two-sided H\"older-type comparison between the $2$-Wasserstein distance and its max-sliced analogue, first for centered probability measures and then, after adding separate mass and barycenter terms, for finite positive Radon measures.

Finally, in the absolutely continuous case, we used Sobolev stability estimates for the $k$-plane transform together with Wasserstein-Sobolev comparisons under uniform density bounds \cite{Peyre2018} to obtain a bi-Lipschitz estimate in the $2$-Wasserstein metric for compactly supported probability densities bounded above and below. Altogether, these results show that the $k$-plane transform admits a quantitative stability theory for measures in both Fourier and Wasserstein metrics.

Several directions remain open. One natural question is whether the generalized comparisons obtained here can be sharpened. Other directions include extending the analysis to broader classes of measures under weaker moment or boundedness assumptions, and investigating analogous stability questions for other integral transforms arising in inverse problems and imaging.

\section{Appendix}\label{s:appendix}
\begin{proof}[\textbf{Proof of ${\bf \psi \in C_0(\cG_{k,d})}$ implies $\bf{P^*\psi  \in C_0(\R^d)}$:}]
Since $\psi$ is bounded on $\cG_{k,d}$ and $d\sigma$ is a probability measure on $G_{k,d}$, for all $x \in \mathbb{R}^d$, we have
$$
|(P^*\psi)(x)|
\le
\int_{G_{k,d}} \bigl|\psi\bigl(\alpha,\pi_{\alpha^\perp}(x)\bigr)\bigr|\,d\sigma(\alpha)
\le
\|\psi\|_\infty.
$$
Hence, $P^*\psi$ is bounded.

To prove continuity, let $x_n \to x$ in $\mathbb{R}^d$. For each fixed $\alpha \in G_{k,d}$, since $\pi_{\alpha^\perp}$ and $\psi$ are continuous, we have
$$
\psi\bigl(\alpha,\pi_{\alpha^\perp}(x_n)\bigr)
\longrightarrow
\psi\bigl(\alpha,\pi_{\alpha^\perp}(x)\bigr)
$$
as $n \to \infty$. Moreover,
$$
\bigl|\psi\bigl(\alpha,\pi_{\alpha^\perp}(x_n)\bigr)\bigr|
\le \|\psi\|_\infty.
$$
Therefore, by the dominated convergence theorem (see e.g., \cite{Folland}),
$$
(P^*\psi)(x_n)
=
\int_{G_{k,d}} \psi\bigl(\alpha,\pi_{\alpha^\perp}(x_n)\bigr)\,d\sigma(\alpha)
\longrightarrow
\int_{G_{k,d}} \psi\bigl(\alpha,\pi_{\alpha^\perp}(x)\bigr)\,d\sigma(\alpha)
=
(P^*\psi)(x).
$$
Thus $P^*\psi$ is continuous on $\mathbb{R}^d$.

It remains to prove that $P^*\psi(x) \to 0$ as $|x|\to\infty$. Since $\psi\in C_0(\cG_{k,d})$ and $G_{k,d}$ is compact, the vanishing-at-infinity condition is uniform in $\alpha$: for every $\varepsilon>0$ there exists $R>0$ such that
\begin{align}
|\psi(\alpha,y)|<\varepsilon
\qquad\text{whenever } |y|>R,\ \alpha\in G_{k,d}.
\label{eq:uniform-psi}
\end{align}
For each $x\in\mathbb{R}^d$, define
$$
A(x):=\{\alpha\in G_{k,d}:|\pi_{\alpha^\perp}(x)|\le R\}.
$$
Then
\begin{align}
P^*\psi(x)
=
\int_{A(x)} \psi(\alpha,\pi_{\alpha^\perp}(x))\,d\sigma(\alpha)
+
\int_{A(x)^c}\psi(\alpha,\pi_{\alpha^\perp}(x))\,d\sigma(\alpha).
\label{eq:decomposition}
\end{align}
On $A(x)^c$, the estimate \eqref{eq:uniform-psi} gives
\begin{align}
\left|
\int_{A(x)^c}\psi(\alpha,\pi_{\alpha^\perp}(x))\,d\sigma(\alpha)
\right|
\le \varepsilon.
\label{eq:I2-bound-new}
\end{align}

We next show that $\sigma(A(x))\to 0$ as $|x|\to\infty$. Write $x=ru$, where $r=|x|$ and $u\in \mathbb S^{d-1}$. Then
$$
|\pi_{\alpha^\perp}(x)|
=
r|\pi_{\alpha^\perp}(u)|,
$$
and hence
\begin{align}
A(x)
=
\left\{\alpha\in G_{k,d}:|\pi_{\alpha^\perp}(u)|\le \frac{R}{r}\right\}.
\label{eq:A-x-explicit}
\end{align}
For fixed $u\in\mathbb S^{d-1}$, the map
$$
\alpha\mapsto |\pi_{\alpha^\perp}(u)|
$$
is continuous on the compact space $G_{k,d}$. The sets
$$
E_t(u):=\{\alpha\in G_{k,d}:|\pi_{\alpha^\perp}(u)|\le t\}
$$
are compact and decrease to
$$
\{\alpha\in G_{k,d}:u\in\alpha\},
$$
as $t$ goes to zero.

Since $1\le k\le d-1$, this set can be identified with $G_{k-1,d-1}$, hence is a proper submanifold of $G_{k,d}$ and has $\sigma$-measure zero. Therefore, by continuity of measure from above,
$$
\sigma(E_t(u))\to 0
\qquad\text{as } t\downarrow 0.
$$
Moreover, by the $O(d)$-invariance of $\sigma$, the quantity $\sigma(E_t(u))$ is independent of $u\in\mathbb S^{d-1}$. Hence the convergence is uniform in $u$. Applying this with $t=R/r$ in \eqref{eq:A-x-explicit}, we obtain
\begin{align}
\sigma(A(x))\longrightarrow 0
\qquad\text{as } |x|\to\infty.
\label{eq:A-measure-goes-to-0}
\end{align}

Using \eqref{eq:decomposition}, the bound $|\psi|\le \|\psi\|_\infty$, and \eqref{eq:A-measure-goes-to-0}, we get
\begin{align*}
\left|
\int_{A(x)} \psi(\alpha,\pi_{\alpha^\perp}(x))\,d\sigma(\alpha)
\right|
\le
\|\psi\|_\infty\,\sigma(A(x))
\longrightarrow 0.
\end{align*}
Together with \eqref{eq:I2-bound-new}, this gives
$$
\limsup_{|x|\to\infty}|P^*\psi(x)|\le \varepsilon.
$$
Since $\varepsilon>0$ is arbitrary, we conclude that
$$
P^*\psi(x)\to 0
\qquad\text{as } |x|\to\infty.
$$

Since $P^*\psi$ is continuous and vanishes at infinity, we conclude that
$P^*\psi\in C_0(\mathbb{R}^d)$.
\end{proof}

\section*{Acknowledgements}
The work of the first author was supported in part by NSF DMS grant 2206279. The work of the second author was supported in part by NSF DMS grant 230797. An AI-based language tool was used to assist in editing the manuscript for spelling, grammar, and stylistic improvements.

 \bibliographystyle{siam}
 \bibliography{references}

\end{document}